\title{SIMULTANEOUS DIAGONALIZATION OF MATRICES AND ITS APPLICATIONS IN QUADRATICALLY CONSTRAINED QUADRATIC PROGRAMMING\thanks{This work was partially supported by Research Grants Council of Hong Kong under grant 414513. The second author is also grateful to the support from the Patrick Huen Wing Ming Chair Professorship of Systems Engineering \& Engineering Management.}}
\author{Rujun Jiang\thanks{Department of Systems Engineering and Engineering Management, The Chinese University of Hong Kong, Shatin, N. T., Hong Kong (\email{rjjiang@se.cuhk.edu.hk})} \and Duan Li\thanks{Corresponding author. Department of Systems Engineering and Engineering Management, The Chinese University of Hong Kong, Shatin, N. T., Hong Kong (\email{dli@se.cuhk.edu.hk})}}
\newtheorem{asmp}{Assumption}[section]
\newtheorem{rmk}{Remark}[section]
\def\Dg{{\rm Diag}}
\begin{document}

\maketitle
\slugger{mms}{xxxx}{xx}{x}{x--x}

\begin{abstract}
 An equivalence between attainability of simultaneous diagonalization (SD) and hidden convexity in quadratically constrained quadratic programming (QCQP) stimulates us to investigate necessary and sufficient SD conditions, which is one of the open problems posted by
Hiriart-Urruty [SIAM Rev., 49 (2007), pp. 255--273] nine years ago.  In this paper we give a necessary and sufficient SD condition for any two real symmetric matrices and offer a necessary and sufficient SD condition for any finite collection of real symmetric matrices under the existence assumption of a semi-definite matrix pencil. Moreover, we apply our SD conditions to QCQP, especially with one or two quadratic constraints, to verify the exactness of its second-order cone programming relaxation and to facilitate the solution process of QCQP.
\end{abstract}

\begin{keywords} simultaneous diagonalization, congruence, quadratically constrained quadratic programming, second-order cone programming relaxation. \end{keywords}

\begin{AMS} 15A, 65K, 90C\end{AMS}

\pagestyle{myheadings}
\thispagestyle{plain}
\markboth{SIMULTANEOUS DIAGONALIZATION}{RUJUN JIANG AND DUAN LI}

\section{Introduction}

Recently, Ben-Tal and Hertog \cite{B1} show that if the two matrices in
the objective function and the constraint of a quadratically constrained quadratic programming (QCQP) problem are simultaneously diagonalizable (SD), this QCQP problem can be
then turned into an equivalent second-order cone programming (SOCP) problem, which can be solved much faster than the semi-definite programming (SDP) reformulation.
(Note that to simply the notation, we use the abbreviation SD to denote both ``simultaneously diagonalizable'' and ``simultaneous diagonalization'' via congruence when no confusion will arise.)
A natural question to ask is when a given pair of matrices are SD. Essentially, the SD problem of a finite collection of symmetric matrices via congruence is one of the 14 open problems posted by
Hiriart-Urruty \cite{H} nine years ago: ``A collection of $m$ symmetric $(n,n)$ matrices $\{A_1, A_2, \ldots, A_m\}$ is said to be {\it simultaneously diagonalizable via congruence} if there exists a nonsingular matrix $P$ such that each of the $P^TA_iP$ is diagonal.''

Two sufficient conditions for SD of two matrices $A$ and $B$ have been developed in \cite{H, M, W, S}: i) There exist $\mu_1$ and $\mu_2$ $\in$ $\Re$ such that $\mu_1A+\mu_2B\succ0$, which is termed as {\it regular case} in \cite{M, W,S}; and ii) When $n\geq3$, $(\langle Ax,x\rangle=0 ~{\rm and}~ \langle Bx,x\rangle=0)$ implies $(x=0)$. Lancaster and Rodman \cite{LR} offer another sufficient condition for SD of two matrices $A$ and $B$: There exist $\alpha$ and $\beta$ such that $C: = \alpha A+\beta B\succeq0$ and $Ker(C)\subseteq Ker(A)\cap Ker(B)$. Several other SD sufficient conditions of two matrices are presented in Theorem 4.5.15 of \cite{HJ}. A necessary and sufficient condition for SD of two matrices has been derived by Uhlig in \cite{Uhlig2,Uhlig} if at least one of the two matrices is nonsingular.
A sufficient SD condition is well known for multiple matrices: If $m$ matrices commute with each other, then they are SD, see \cite{HJ} (Problems 22 and 23, Page 243).
  The equivalence between the attainability of SD and a hidden convexity in QCQP stimulates the research in this paper.

In this paper, we provide a necessary and sufficient SD condition for any two real symmetric matrices, which extends the result in \cite{Uhlig} to any two arbitrary matrices. We then show its applications in QCQP with one quadratic inequality constraint or with an interval constraint. Furthermore, we find a necessary and sufficient SD condition for $m$ ($m\geq3$) real symmetric matrices when there is a semi-definite matrix pencil, i.e., there exist $\lambda_1,\ldots,\lambda_m\in \Re$ (not all of which are zero), such that $\lambda_1A_1+\lambda_2A_2+\cdots+\lambda_mA_m\succeq0$. The semi-definite matrix pencil has been a widely adopted assumption in the solvability study in SDP and its duality \cite{KS,St}. We also show its applications in QCQP and show that QCQP with two quadratic constraints has a non-trivial exact relaxation. Our proofs are constructive for both cases of two matrices and multiple (more than two) matrices. Thus, we essentially establish systematic procedures for testing SD using the derived sufficient and necessary condition for any finite number of symmetric matrices.

 Our paper is organized as follows. In Section 2, we provide a necessary and sufficient SD condition for any pair of real symmetric matrices and show its applications in QCQP with one quadratic inequality  constraint or an interval quadratic constraint. In Section 3, we show necessary and sufficient SD conditions for a finite number of real symmetric matrices under the definite matrix pencil condition and a more general semi-definite matrix pencil condition. We next discuss the applications of SD in QCQP with multiple, in particularly 2, quadratic constraints in Section 4. Finally, we conclude our paper in Section 5.

\textbf{Notation.} We use $I_p$ and $0_p$ to denote the identity matrix of dimension $p$ $\times$ $p$ and the zero square matrix of dimension $p$ $\times$ $p$, respectively, and use $0_{p \times q}$ to denote a zero matrix of dimension $p$ $\times$ $q$. The notation $\Re^n$ represents
the $n$ dimensional vector space and $S^n$ represents the $n\times n$ symmetric matrix space.
We denote by $tr(A)$ the trace of a square matrix $A$,
and  $\diag(A_1,A_2,\ldots,A_k)$ the block diagonal matrix
$$\left(\begin{array}{cccc}
A_1&&&\\
&A_2&&\\
&&\ddots&\\
&&&A_k
\end{array}\right),$$
where $A_i$, $i=1,\ldots,k$, are all square matrices.
 Finally, we use $\Dg(X)$ to denote the vector formed by the diagonal elements of the square matrix $X$ with $\Dg(X)_k$ representing its $k$th entry.

\section{Simultaneous diagonalization for two matrices}
In this section, we first derive a necessary and sufficient simultaneous condition for any two matrices and then show its applications in the trust region subproblem (TRS) and its variants.
\subsection{SD condition}
The following lemma from \cite{Uhlig} shows a necessary and sufficient condition for SD of two matrices if at least one of the two matrices is nonsingular.
\begin{lemma}\label{Lemma1}\cite{Uhlig}
 If one of the two real symmetric matrices $A$ and $B$ is nonsingular (without loss of generality, we assume that $A$ is nonsingular), they are SD if and only if the Jordan normal form of $A^{-1}B$ is diagonal.
\end{lemma}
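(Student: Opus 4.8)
The plan is to treat the two implications separately: necessity is a one-line similarity computation, while sufficiency is carried out by building the congruence $P$ explicitly from the eigenspace decomposition of $A^{-1}B$, exploiting the fact that $A^{-1}B$ is self-adjoint with respect to the (generally indefinite) bilinear form induced by $A$. For necessity, suppose $P$ is nonsingular with $D_A := P^TAP$ and $D_B := P^TBP$ both diagonal. Since $A$ is nonsingular, every diagonal entry of $D_A$ is nonzero, so $D_A$ is invertible, and from $A = P^{-T}D_AP^{-1}$ and $B = P^{-T}D_BP^{-1}$ we obtain $A^{-1}B = PD_A^{-1}D_BP^{-1}$. Hence $A^{-1}B$ is similar over $\Re$ to the real diagonal matrix $D_A^{-1}D_B$, and therefore its Jordan normal form is diagonal.

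For sufficiency, assume the Jordan normal form of $M := A^{-1}B$ is diagonal, i.e., $M$ is diagonalizable over $\Re$ with (distinct) real eigenvalues $\lambda_1,\dots,\lambda_r$, and write $\Re^n = E_{\lambda_1}\oplus\cdots\oplus E_{\lambda_r}$ for the associated eigenspace decomposition. First I would verify that $M$ is self-adjoint for the bilinear form $[x,y] := x^TAy$: using the symmetry of $A$ and $B$ one has $[Mx,y] = x^TBy = [x,My]$. Consequently, eigenspaces for distinct eigenvalues are $[\cdot,\cdot]$-orthogonal, since $\lambda_i[u,v] = [Mu,v] = [u,Mv] = \lambda_j[u,v]$ forces $[u,v]=0$ whenever $u\in E_{\lambda_i}$, $v\in E_{\lambda_j}$ with $i\neq j$. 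Because $A$ is nonsingular, $[\cdot,\cdot]$ is nondegenerate, and together with this block-orthogonality its restriction to each $E_{\lambda_i}$ is nondegenerate as well. I would then use the standard orthogonalization of a nondegenerate symmetric bilinear form to pick, inside each $E_{\lambda_i}$, a $[\cdot,\cdot]$-orthogonal basis; since $M$ acts as $\lambda_i$ times the identity on $E_{\lambda_i}$, any such basis also diagonalizes $M|_{E_{\lambda_i}}$. Assembling all these vectors as the columns of a nonsingular $P$, the matrix $P^TAP$ is diagonal by construction, while $P^{-1}MP = P^{-1}A^{-1}BP = (P^TAP)^{-1}(P^TBP)$ is diagonal too; since $P^TAP$ is diagonal and invertible, $P^TBP = (P^TAP)(P^{-1}MP)$ is then diagonal, so $A$ and $B$ are SD.

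The main obstacle, and the reason one cannot simply ``reduce to the positive definite case,'' is that $A$ is only assumed nonsingular and may be indefinite, so no real square root $A^{1/2}$ is at hand and $A^{-1}B$ need not be similar to a symmetric matrix; the indefinite-inner-product viewpoint and the nondegeneracy of $[\cdot,\cdot]$ on each eigenspace are exactly what replace that reduction. A related subtlety worth stating explicitly is the reading of ``the Jordan normal form of $A^{-1}B$ is diagonal'': it must mean diagonalizability over $\Re$ with real eigenvalues (equivalently, the real Jordan form contains no $2\times2$ rotation block), which is consistent with the necessity computation, where the eigenvalues of $A^{-1}B$ turn out to be the real ratios $\Dg(D_A^{-1}D_B)_k$.
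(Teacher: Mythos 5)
Your proof is correct. Note first that the paper itself cites this lemma to Uhlig and does not prove it; what the paper does give, immediately after Lemma~2.2, is a constructive derivation of the congruent matrix in the sufficiency direction: take $P$ so that $J := P^{-1}A^{-1}BP = \diag(\lambda_1 I_{n_1},\dots,\lambda_k I_{n_k})$, observe that $P^TAP$ is symmetric and $(P^TAP)J = P^TBP$ is symmetric, invoke Lemma~2.2 to conclude $P^TAP$ and $P^TBP$ are block diagonal aligned with $J$, deduce $B_i = \lambda_i A_i$ on each block, and finish by spectrally decomposing each $A_i$. Your sufficiency argument reaches the same eigenspace decomposition but by a different mechanism: instead of Uhlig's structural Lemma~2.2, you use the fact that $M = A^{-1}B$ is self-adjoint for the (possibly indefinite) form $[x,y]=x^TAy$, so eigenspaces for distinct eigenvalues are $[\cdot,\cdot]$-orthogonal, the restriction of $[\cdot,\cdot]$ to each eigenspace is nondegenerate, and an orthogonal basis inside each eigenspace produces the desired $P$. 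The two routes are really two faces of the same decomposition (the paper's blocks $A_i$, $B_i$ sit over exactly your eigenspaces $E_{\lambda_i}$), but yours is more self-contained and conceptually transparent, while the paper's $PR$ construction maps more directly onto the algorithmic machinery of Algorithm~1. Your necessity argument ($A^{-1}B = PD_A^{-1}D_BP^{-1}$, hence similar to a real diagonal matrix) is the standard one and is what Uhlig's cited result rests on; the paper does not reproduce it. Your closing caveat about the meaning of ``diagonal Jordan form'' over $\Re$ (real eigenvalues, no $2\times2$ rotation blocks) is a useful clarification and is consistent with the intended reading.
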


The following lemma is also useful in deriving the congruent matrix to achieve the SD for two matrices.
\begin{lemma}
\label{lem2} (Lemma 1 in \cite{Uhlig2})
Let a Jordan matrix be denoted by $$K:=\diag(C(\lambda_1),C(\lambda_2),\ldots,C(\lambda_k)),$$
where $C(\lambda_i):=\diag(K_{i_1}(\lambda_i),K_{i_2}(\lambda_i),\ldots,K_{i_t}(\lambda_i))$ denotes all the Jordan blocks associated with eigenvalue $\lambda_i$, and
$$K_{i_j}:=\left(\begin{array}{cccccc}
\lambda_i&1&&&\\
&\lambda_i&1&&\\
&&\ddots&\ddots&\\
&&&\lambda_i&1\\
&&&&\lambda_i
\end{array}\right)_{i_j\times i_j},$$
$j=1,2,\ldots,t.$

 If $SK$ is symmetric for a symmetric matrix $S$,  then $S$ is  block diagonal,  $S=\diag(S_1,S_2,\ldots,S_k),$
with $\dim S_i=\dim C(\lambda_i)$.
\end{lemma}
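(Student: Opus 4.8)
The plan is to convert the hypothesis ``$SK$ is symmetric'' into the matrix identity $SK = K^{T}S$, using $(SK)^{T} = K^{T}S^{T} = K^{T}S$ (the last equality because $S=S^{T}$), and then to read this identity off blockwise against the eigenvalue-grouped block structure of $K$. The underlying principle is the classical fact that a Sylvester-type equation $PX=XQ$ has only $X=0$ as a solution when $P$ and $Q$ have disjoint spectra; here the relevant $P$ and $Q$ are scalar shifts of nilpotent matrices, so the argument can be made completely elementary.

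First I would partition $S=(S_{ij})_{i,j=1}^{k}$ according to the block structure of $K$, so that the block $S_{ij}$ has $\dim C(\lambda_i)$ rows and $\dim C(\lambda_j)$ columns. Since $K$ is block diagonal, the $(i,j)$ block of $SK=K^{T}S$ is simply
\[
  S_{ij}\,C(\lambda_j) \;=\; C(\lambda_i)^{T}\,S_{ij}.
\]
Hence it suffices to prove that $S_{ij}=0$ whenever $i\neq j$: in that case $S=\diag(S_1,\dots,S_k)$ with $S_i:=S_{ii}$ of size $\dim C(\lambda_i)$, which is exactly the asserted block-diagonal form (and each $S_i$ is automatically symmetric because $S$ is).

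To kill the off-diagonal blocks, write $C(\lambda_j)=\lambda_j I+N_j$ and $C(\lambda_i)^{T}=\lambda_i I+M_i$, where $N_j$ is the strictly upper triangular nilpotent part carrying the superdiagonal $1$'s of the Jordan blocks for $\lambda_j$ and $M_i=N_i^{T}$ is strictly lower triangular and nilpotent. Substituting into the block equation and cancelling the scalar parts gives
\[
  (\lambda_j-\lambda_i)\,S_{ij} \;=\; M_i\,S_{ij}-S_{ij}\,N_j,
\]
and the right-hand side equals $\mathcal{L}(S_{ij})$ for the linear operator $\mathcal{L}(X):=M_iX-XN_j$ acting on $(\dim C(\lambda_i))\times(\dim C(\lambda_j))$ matrices. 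The maps $X\mapsto M_iX$ and $X\mapsto XN_j$ commute and are both nilpotent, so $\mathcal{L}$ is nilpotent, say $\mathcal{L}^{p}=0$; iterating the last display then yields $(\lambda_j-\lambda_i)^{p}S_{ij}=\mathcal{L}^{p}(S_{ij})=0$. Since $i\neq j$ forces $\lambda_i\neq\lambda_j$, we conclude $S_{ij}=0$, which finishes the argument.

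I expect the only point needing care --- the ``hard part'' --- to be the nilpotency of $\mathcal{L}$: because left multiplication by $M_i$ commutes with right multiplication by $N_j$, the binomial expansion of $\mathcal{L}^{p}$ is a signed sum of terms $M_i^{p-r}\,X\,N_j^{r}$, each of which vanishes as soon as $p$ exceeds the sum of the nilpotency indices of $M_i$ and $N_j$; this is precisely where the eigenvalue separation $\lambda_i\neq\lambda_j$ does its work. The rest is routine bookkeeping with the block partition, together with the observation that the surviving diagonal blocks inherit symmetry from $S$.
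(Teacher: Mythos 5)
The paper states this lemma without proof, citing it as Lemma~1 of \cite{Uhlig2}, so there is no in-paper argument to compare against; judged on its own, your proof is correct and complete. The reduction of the two symmetry hypotheses to the intertwining identity $SK=K^{T}S$, the blockwise translation into $S_{ij}\,C(\lambda_j)=C(\lambda_i)^{T}S_{ij}$, and the Sylvester-type conclusion that the nilpotent operator $\mathcal{L}(X)=M_iX-XN_j$ cannot admit the nonzero eigenvalue $\lambda_j-\lambda_i$ (hence $S_{ij}=0$ for $i\neq j$) are all sound, and the binomial-expansion justification of the nilpotency of $\mathcal{L}$ pins down exactly the one step requiring care. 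One small point worth stating explicitly: the hypothesis that the $\lambda_i$ indexing the eigenvalue groups $C(\lambda_i)$ are pairwise distinct is what guarantees $\lambda_j-\lambda_i\neq 0$ for $i\neq j$, and it is this disjointness of spectra that the whole argument hinges on.
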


When the condition in Lemma \ref{Lemma1} is satisfied, we can identify a congruent matrix that makes two $n$ $\times$ $n$ symmetric matrices $A$ and $B$ SD as follows.

 When $A$ is nonsingular and the Jordan normal form, denoted by $J$, of $A^{-1}B$ is diagonal, there exists a nonsingular matrix $P$ such that
 $$J:=P^{-1}A^{-1}BP=\diag(\lambda_1I_{n_1},\ldots,\lambda_kI_{n_k}),$$ where $k$ $\leq$ $n$.
 Noting the facts that $P^TAP$ is symmetric and $(P^TAP)J = P^TBP$ is symmetric. Applying Lemma \ref{lem2}, we have the following two block diagonal matrices,
  $$P^TAP=\diag(A_1,\ldots,A_k) {\rm~ and~ } P^TBP=\diag(B_1,\ldots,B_k),$$
 where $\diag(A_1,\ldots,A_k)$ and $\diag(B_1,\ldots,B_k)$ have the same partition as $J$, i.e., $\dim A_i=\dim B_i=\dim I_{n_i}$, $i=1,\ldots,k$. Moreover, \begin{eqnarray*}
J&=&P^{-1}A^{-1}BP=(P^TAP)^{-1}P^TBP\\
&=&(\diag(A_1,\ldots,A_k))^{-1}\diag(B_1,\ldots,B_k)\\
&=&\diag(A_1^{-1}B_1,\ldots,A_k^{-1}B_k).
\end{eqnarray*}
Thus, for $i=1,\ldots,k$, $A_i^{-1}B_i=\lambda_iI_{n_i}$, i.e., $\lambda_i A_i=B_i$. Let $R=\diag(R_1,\ldots,R_k)$ such that $R_i^TA_iR_i$ is a spectral decomposition of $A_i$, and then $R_i^TB_iR_i=\lambda_iR_i^TA_iR_i$ is also diagonal, $i=1,\ldots,k$.
So with the two nonsingular matrices $P$ and $R$, $(PR)^TAPR$ and $(PR)^TBPR$ are both diagonal, i.e., $A$ and $B$ are SD via the congruent matrix $PR$.

\begin{lemma}\label{lem2.3}
Let $A:=\diag(A_1,0_{q+r})$ {\rm~and~}$$B:=\left(\begin{array}{ccc}
B_1&0_{p \times q}&B_2\\
0_{q \times p}&B_3&0_{q \times r}\\
B_2^T&0_{r \times q}&0_r
\end{array}\right),$$ where both $A_1$ and $B_1$ are $p\times p$ nonsingular real symmetric matrices, $B_3$ is a $q\times q$ nonsingular real symmetric matrix, and $B_2$ is a $p\times r$ matrix with $r\leq p$ and with full column rank, then $A$ and $B$ cannot be SD.
\end{lemma}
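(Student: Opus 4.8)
The plan is to argue by contradiction: suppose a nonsingular $P$ makes both $P^TAP$ and $P^TBP$ diagonal, and extract a contradictory rank count. Write $n:=p+q+r$ and let $p_1,\dots,p_n$ be the columns of $P$. The first observation is the standard one: if $(P^TBP)_{ii}=0$ then, since $P^TBP$ is diagonal, the whole $i$th row $p_i^TBp_j$ ($j=1,\dots,n$) vanishes, and as $\{p_j\}$ is a basis this forces $Bp_i=0$; likewise $(P^TAP)_{ii}=0$ forces $Ap_i=0$. Since ${\rm rank}(P^TAP)={\rm rank}(A)=p$, exactly $q+r$ diagonal entries of $P^TAP$ vanish, so the corresponding $q+r$ columns of $P$ are linearly independent and lie in ${\rm Ker}(A)=\{(0,y,z):y\in\Re^q,\,z\in\Re^r\}$; as $\dim{\rm Ker}(A)=q+r$, they form a basis of ${\rm Ker}(A)$.

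Next I would record two facts about the block structure, each a short direct computation. First, ${\rm Ker}(A)\cap{\rm Ker}(B)=\{0\}$: a vector $(0,y,z)$ lies in ${\rm Ker}(B)$ iff $B_3y=0$ and $B_2z=0$, which forces $y=0$ and $z=0$ because $B_3$ is nonsingular and $B_2$ has full column rank. Second, the restriction of the bilinear form of $B$ to ${\rm Ker}(A)$ has rank only $q$: for $u=(0,y,z)$ and $v=(0,y',z')$ one computes $u^TBv=y^TB_3y'$, so in the coordinates $(y,z)$ this restricted form is represented by the block matrix with $B_3$ in the leading block and zeros elsewhere, of rank ${\rm rank}(B_3)=q$.

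The contradiction is then immediate. Because each column $p_i$ is nonzero and ${\rm Ker}(A)\cap{\rm Ker}(B)=\{0\}$, no column of $P$ can have a zero diagonal entry in both $P^TAP$ and $P^TBP$; hence each of the $q+r$ columns of $P$ spanning ${\rm Ker}(A)$ has a \emph{nonzero} diagonal entry in $P^TBP$. But in this basis of ${\rm Ker}(A)$ the restriction of $B$ is represented by the diagonal matrix carrying exactly those $q+r$ nonzero entries, so it has rank $q+r$; this contradicts the rank being $q$, since $r\ge1$ (forced by $B_2$ having full column rank with $r\le p$). Hence no such $P$ exists and $A,B$ are not SD.

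I do not expect a genuine obstacle here; the point to keep in mind is that, in contrast with Lemmas \ref{Lemma1} and \ref{lem2}, there need not be any nonsingular matrix in the pencil ${\rm span}\{A,B\}$, so one cannot reduce to the Jordan-form criterion and must instead play the dimension of ${\rm Ker}(A)$ off against the rank of $B$ restricted to ${\rm Ker}(A)$. The only bookkeeping subtlety is matching the combinatorics of zero diagonal entries of $P^TAP$ and $P^TBP$ with membership in ${\rm Ker}(A)$ and ${\rm Ker}(B)$, which the first paragraph handles.
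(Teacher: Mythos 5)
Your proof is correct, and it takes a genuinely different route from the paper's. The paper proceeds by an explicit $3\times3$ block computation: writing $P$ in block form, it first deduces $P_2=0,\ P_3=0$ from $P^TAP$ being diagonal, then asserts that $B$ is nonsingular, so $P^TBP$ must be a nonsingular diagonal matrix; from $P_5^TB_3P_5$ nonsingular it gets $P_5$ nonsingular, hence $P_6=0$ from $P_5^TB_3P_6=0$, and then $P_6^TB_3P_6=0_r$ contradicts nonsingularity of $P^TBP$. Your argument instead plays the rank of the symmetric bilinear form $B$ restricted to $\ker(A)$ against itself in two bases: the standard one, where the restricted Gram matrix is $\left(\begin{smallmatrix}B_3&0\\0&0\end{smallmatrix}\right)$ of rank $q$, and the SD basis (the $q+r$ columns of $P$ killed by $A$), where, because $\ker(A)\cap\ker(B)=\{0\}$, the restricted Gram matrix is diagonal with all entries nonzero, hence of rank $q+r$. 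The contradiction is basis-invariance of rank.

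There is one point worth flagging, and it actually works in your favor. The paper's argument hinges on the assertion that ``$B_1$ and $B_3$ nonsingular, $B_2$ of full column rank'' implies $B$ is nonsingular, but this implication is false in general: take $p=2,q=1,r=1$ with $B_1=\left(\begin{smallmatrix}1&0\\0&-1\end{smallmatrix}\right)$, $B_3=(1)$, $B_2=\left(\begin{smallmatrix}1\\1\end{smallmatrix}\right)$; then $(1,-1,0,-1)^T\in\ker(B)$, so $B$ is singular even though all three block hypotheses hold. (The implication does hold when $B_1$ is definite, which is why it is easy to take for granted.) Your proof never uses nonsingularity of $B$ --- only $\ker(A)\cap\ker(B)=\{0\}$ and the rank-$q$ computation of $B|_{\ker(A)}$ --- so it is not affected by this, and in fact gives a clean fix for the paper's argument. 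One cosmetic remark: you parenthetically say $r\ge1$ is ``forced by $B_2$ having full column rank with $r\le p$,'' which is not literally true ($r=0$ satisfies both vacuously); the hypothesis $r\ge1$ is simply part of the setup of the lemma (otherwise the block $\mathcal{B}_2$ does not exist and the conclusion is false), and is indeed what the downstream Theorem~\ref{Two-SD} assumes when it invokes this lemma.
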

\begin{proof}
 We will prove the lemma by contradiction. Let us assume that $A$ and $B$ are SD. Then, there exists a congruent matrix $$P:=\left(\begin{array}{ccc}
P_1&P_2&P_3\\
P_4&P_5&P_6\\
P_7&P_8&P_9\end{array}\right),$$ where the partition of $P$ is the same as the partition of $B$, such that $P^TAP$ and $P^TBP$ are diagonal, and without loss of generality, the nonzero elements of $P^TAP$ lie in the first $p$ entries of the diagonal
$$P^TAP= \left(\begin{array}{ccc}
P_1^TA_1P_1&P_1^TA_1P_2&P_1^TA_1P_3\\
P_2^TA_1P_1&P_2^TA_1P_2&P_2^TA_1P_3\\
P_3^TA_1P_1&P_3^TA_1P_2&P_3^TA_1P_3\end{array}\right).$$ Since $A_1$ and $P_1^TA_1P_1$ are nonsingular and $P_1$ is nonsingular, and thus $P_1^TA_1$ is nonsingular. As $P^TAP$ is diagonal, then $P_1^TA_1P_2$ and $P_1^TA_1P_3$ are both zero matrices. Thus, $P_2=0$ and $P_3=0$.

Furthermore, the following transformation holds for matrix $B$,
$$P^TBP=\left(\begin{array}{ccc}
\begin{array}{c}P_1^TB_1P_1+P_1^TB_2P_7\\+P_4^TB_3P_4+P_7^TB_2^TP_1\\\end{array}&P_1^TB_2P_8+P_4^TB_3P_5&P_1^TB_2P_9+P_4^TB_3P_6\\

P_8^TB_2^TP_1+P_5^TB_3^TP_4&P_5^TB_3P_5&P_5^TB_3P_6\\
P_9^TB_2^TP_1+P_6^T B_3^TP_4&P_6^TB_3^TP_5&P_6^TB_3P_6\end{array}\right).$$ From our assumptions, $B_1$ and $B_3$ are both nonsingular, and $B_2$ is of full column rank, which implies that $B$ is nonsingular. So the diagonal matrix $P^TBP$ must be nonsingular. As $P_5^TB_3P_5$ is nonsingular, $P_5$ is thus nonsingular. Note that $P_5^TB_3P_6=0$ implies that $P_6=0_{q\times r}$, which further leads to $P_6^TB_3P_6=0_r$. This contradicts the non-singularity of $P^TBP$.
\end{proof}

\begin{lemma}\label{lem2.4}
For any two $p\times p$ real symmetric matrices $A_1$ and $B_1$, and a $q\times q$ matrix $B_2$, if $A_1$ and $B_2$ are nonsingular and diagonal, and $B_1$ is nonsingular, then $A_1$ and $B_1$ are SD if and only if $A:=\diag(A_1,0_q)$ and $B:=\diag(B_1, B_2)$ are SD.
\end{lemma}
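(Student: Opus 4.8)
The plan is to treat the two implications separately. The forward implication is routine: if $Q$ is a $p\times p$ nonsingular matrix with $Q^TA_1Q$ and $Q^TB_1Q$ both diagonal, I would set $P:=\diag(Q,I_q)$ and simply verify that $P^TAP=\diag(Q^TA_1Q,\,0_q)$ and $P^TBP=\diag(Q^TB_1Q,\,B_2)$ are diagonal, the second identity using the hypothesis that $B_2$ is already diagonal. Since $P$ is nonsingular, $A$ and $B$ are SD.

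The substantive part is the converse. Suppose $P^TAP$ and $P^TBP$ are both diagonal for some nonsingular $P$. First I would normalize: because $A_1$ is nonsingular, $A=\diag(A_1,0_q)$ has rank $p$, so the diagonal matrix $P^TAP$ has exactly $p$ nonzero diagonal entries; post-multiplying $P$ by a suitable permutation matrix (which keeps $P^TBP$ diagonal and preserves the $p+q$ block sizes) I may assume $P^TAP=\diag(D,0_q)$ with $D\in S^p$ nonsingular diagonal, while $P^TBP=\diag(E_1,E_2)$ stays diagonal. Partition $P=\left(\begin{array}{cc}P_1&P_2\\P_3&P_4\end{array}\right)$ conformally with this splitting.

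The argument then proceeds by a chain of forced-zero deductions in the spirit of the proof of Lemma~\ref{lem2.3}. Expanding $P^TAP$ blockwise, the $(1,1)$ block gives $P_1^TA_1P_1=D$ nonsingular, hence $P_1$ is nonsingular; then the $(1,2)$ block $P_1^TA_1P_2=0$ together with the nonsingularity of $P_1^TA_1$ forces $P_2=0$, and consequently $P_4$ is nonsingular since $P$ is. Next, expanding $P^TBP$ with $P=\left(\begin{array}{cc}P_1&0\\P_3&P_4\end{array}\right)$, the $(2,1)$ block equals $P_4^TB_2P_3$; since $B_2$ and $P_4$ are nonsingular, this being zero forces $P_3=0$. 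Finally, with $P_2=P_3=0$ the matrix $P$ is block diagonal, $P=\diag(P_1,P_4)$, so the $(1,1)$ blocks of $P^TAP$ and $P^TBP$ reduce to $P_1^TA_1P_1=D$ and $P_1^TB_1P_1=E_1$, both diagonal; thus $P_1$ simultaneously diagonalizes $A_1$ and $B_1$.

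The only delicate point — and it is a mild one — is the initial normalization that moves the nonzero block of $P^TAP$ into the leading $p\times p$ position; once that is justified, every subsequent step is driven purely by the nonsingularity of $A_1$ and of $B_2$, and no further use of the diagonality of $A_1$ or $B_2$, nor of the nonsingularity of $B_1$, is needed in this direction.
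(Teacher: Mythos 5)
Your proof is correct and takes essentially the same route as the paper: trivial forward direction, and in the converse direction a block partition of $P$, forcing $P_2=0$ from the off-diagonal block of $P^TAP$ and then $P_3=0$ from the off-diagonal block of $P^TBP$, concluding that $P_1$ works for $A_1,B_1$. The one small variant is your justification that $P_4$ is nonsingular: you deduce it directly from the block-lower-triangular structure of $P$ once $P_2=0$, whereas the paper instead observes that $P_4^TB_2P_4$ is the trailing block of the nonsingular diagonal matrix $P^TBP$ (which relies on $B$, hence $B_1$, being nonsingular); your version is marginally cleaner and, as you note, never actually uses the nonsingularity of $B_1$ or the diagonality of $A_1$, $B_2$ in that direction. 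You also make explicit (via a permutation) the paper's unstated ``without loss of generality'' that the nonzero diagonal block of $P^TAP$ sits in the leading $p\times p$ position, which is a welcome tightening.
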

\begin{proof}
The ``$\Rightarrow$'' part: If $A_1$ and $B_1$ are SD via congruent matrix $S_{p \times p}$, then the congruent matrix $\diag(S_{p\times p},I_q)$ makes $A$ and $B$ SD.

The ``$\Leftarrow$'' part: If
$A$ and $B$
are SD, then there exists a nonsingular matrix $$P:=\left(\begin{array}{cc}
P_1&P_2\\
P_3&P_4
\end{array}\right),$$ such that, i) matrix
$$P^TAP=\left(\begin{array}{cc}
P_1^T&P_3^T\\
P_2^T&P_4^T
\end{array}\right)
\left(\begin{array}{cc}
A_1&\\
&0_q
\end{array}\right)
\left(\begin{array}{cc}
P_1&P_2\\
P_3&P_4
\end{array}\right)=\left(\begin{array}{cc}
P_1^TA_1P_1&P_1^TA_1P_2\\
P_2^TA_1P_1&P_2^TA_1P_2
\end{array}\right)$$ is diagonal, where, without loss of generality, the diagonal matrix $P_1^TA_1P_1$ is assumed to be nonsingular, and ii) matrix $P^TBP$ is diagonal.
Then, $P_1$ must be nonsingular and $P_2$ must be a zero matrix. We can now simplify $P^TBP$ to
 $$\left(\begin{array}{cc}
P_1^T&P_3^T\\
0_{q \times p}&P_4^T
\end{array}\right)
\left(\begin{array}{cc}
B_1&\\
&B_2
\end{array}\right)
\left(\begin{array}{cc}
P_1&0_{p \times q}\\
P_3&P_4
\end{array}\right)  =  \left(\begin{array}{cc}
P_1^T B_1P_1+P_3^TB_2P_3&P_3^TB_2P_4\\
P_4^TB_2P_3&P_4^TB_2P_4
\end{array}\right).$$ Since both $B_2$ and $P_4^TB_2P_4$ are nonsingular, $P_4$ is nonsingular. Thus, $P_3=0$, due to $P_3^TB_2P_4=0$. Finally, we conclude that $P_1^TB_1P_1$ is diagonal.
\end{proof}

\begin{lemma}\label{claim2-1}
For any two $n$ $\times$ $n$ singular real symmetric matrices $A$ and $B$,  there always exists a nonsingular matrix $U$ such that \begin{equation}\label{MA}
\tilde{A}:=U^TAU=\left(\begin{array}{cc}
A_1&0_{p \times (n-p)}\\
0_{(n-p) \times p} &0_{n-p}
\end{array}\right),
\end{equation} and
\begin{equation}\label{MB}
\tilde{B}:=U^TBU=\left(\begin{array}{ccc}
\mathcal{B}_1&0_{p \times q}&\mathcal{B}_2\\
0_{q \times p}&\mathcal{B}_3&0_{q \times r}\\
\mathcal{B}_2^T&0_{r \times q} &0_r
\end{array}\right),
\end{equation} where $p$, $q$, $r$ $\geq$ 0, $p + q +r=n$,  $A_1$ is a nonsingular diagonal matrix, $A_1$ and $\mathcal{B}_1$ have the same dimension of $p$ $\times$ $p$, $\mathcal{B}_2$ is a $p$ $\times$ $r$ matrix, and  $\mathcal{B}_3$ is a $q$ $\times$ $q$ nonsingular diagonal matrix.
\end{lemma}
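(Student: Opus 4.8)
The plan is to build $U$ as a product $U=U_1U_2U_3$ of three nonsingular congruences: $U_1$ puts $A$ into block‑diagonal form, $U_2$ diagonalizes the part of $B$ living on the kernel of $A$, and $U_3$ is a block‑triangular ``Schur complement'' update that removes the remaining coupling inside $B$ without touching $A$.

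\emph{Step 1.} Since $A$ is symmetric, the spectral theorem gives an orthogonal $U_1$ with $U_1^TAU_1=\diag(A_1,0_{n-p})$, where $p=\mathrm{rank}(A)$ and $A_1$ is a nonsingular diagonal $p\times p$ matrix (order the eigenvalues so the zero ones come last). Write $U_1^TBU_1=\left(\begin{array}{cc}\hat B_1&\hat B_{12}\\\hat B_{12}^T&\hat B_{22}\end{array}\right)$, partitioned conformally with $\diag(A_1,0_{n-p})$, so $\hat B_1\in S^p$ and $\hat B_{22}\in S^{n-p}$. \emph{Step 2.} Diagonalize only the lower‑right block: choose an orthogonal $(n-p)\times(n-p)$ matrix $Q$ with $Q^T\hat B_{22}Q=\diag(\mathcal{B}_3,0_r)$, where $\mathcal{B}_3$ is a nonsingular diagonal $q\times q$ matrix, $q=\mathrm{rank}(\hat B_{22})$, and $r=n-p-q$. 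The congruence $U_2:=\diag(I_p,Q)$ leaves $\diag(A_1,0_{n-p})$ unchanged and transforms $B$ into $\left(\begin{array}{ccc}\hat B_1&C_1&C_2\\C_1^T&\mathcal{B}_3&0_{q\times r}\\C_2^T&0_{r\times q}&0_r\end{array}\right)$, where $[\,C_1\ C_2\,]:=\hat B_{12}Q$ with $C_1$ of size $p\times q$.

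\emph{Step 3.} Using the non‑singularity of $\mathcal{B}_3$, set $U_3:=\left(\begin{array}{ccc}I_p&0&0\\-\mathcal{B}_3^{-1}C_1^T&I_q&0\\0&0&I_r\end{array}\right)$. Because the only non‑identity block of $U_3$ multiplies against the zero rows and columns of $\diag(A_1,0_{q+r})$, a direct check gives $U_3^T\diag(A_1,0_{q+r})U_3=\diag(A_1,0_{q+r})$, so $A$ stays in the form (\ref{MA}). For $B$, this $U_3$ is exactly the update that annihilates the $(1,2)$ and $(2,1)$ blocks while leaving the $(2,2)$ block equal to $\mathcal{B}_3$ and keeping the third block‑row and block‑column untouched; one obtains the form (\ref{MB}) with $\mathcal{B}_1=\hat B_1-C_1\mathcal{B}_3^{-1}C_1^T$ and $\mathcal{B}_2=C_2$. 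Since $U=U_1U_2U_3$ is a product of nonsingular matrices, it is nonsingular, and $p+q+r=n$ with $p,q,r\ge 0$ by construction, which finishes the argument.

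The individual computations are routine matrix algebra; the one place to be careful is the bookkeeping in Step 3, i.e.\ verifying that the block‑triangular congruence neither perturbs the block‑diagonal form of $\tilde A$ nor reintroduces off‑diagonal entries into the already‑diagonalized $(2,2)$ block or into the zero $(2,3)$ and $(3,3)$ blocks of $\tilde B$. Degenerate cases are handled automatically: if $p=0$ (so $A=0$), $q=0$ (so $\hat B_{22}=0$), or $r=0$ (so $\hat B_{22}$ is nonsingular), the corresponding blocks simply vanish and the same construction goes through.
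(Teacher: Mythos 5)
Your proof is correct and takes essentially the same route as the paper's: your $U_1$, $U_2$, $U_3$ are the paper's $Q_1$, $Q_2$, $Q_3$ (spectral decomposition of $A$, then spectral decomposition of the lower-right block of $Q_1^TBQ_1$ with zero eigenvalues pushed to the bottom, then the block lower-triangular Schur-complement congruence using the nonsingular block $\mathcal{B}_3$ to annihilate the off-diagonal $(1,2)$/$(2,1)$ blocks while leaving $\tilde{A}$ fixed). The only differences are cosmetic notation; the computations and the handling of the degenerate $p=0$, $q=0$, $r=0$ cases are identical in substance.
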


\begin{proof}
We outline a proof for this lemma. Applying the spectral decomposition to $A$ identifies matrix $Q_1$ such that $\bar{A}:=Q_1^TAQ_1=\diag(A_1,0_{n-p})$,
where $A_1$ is a $p\times p$ nonsingular diagonal matrix. We express the corresponding $Q_1^TBQ_1$ as
$$\bar{B}:=Q_1^TBQ_1= \left(\begin{array}{cc}
B_1&B_2\\
B_2^T&B_3\\\end{array}\right),$$ where $B_1$ is of dimension $p\times p$.

If $B_3$ is not diagonal, we can further apply the spectral decomposition to $B_3$ by identifying a congruent matrix $Q_2:=\diag(I_p, V_{1})$ such that
$V_1^TB_3V_1$ is diagonal and the nonzero eigenvalues of $V_1^TB_3V_1$ are arranged to the upper left part of its diagonal.
Then $\hat{A}:=Q_2^T\bar{A}Q_2=\bar{A}$ and
$$\hat{B}:=Q_2^T\bar{B}Q_2=\left(\begin{array}{ccc}
B_1&B_4&B_5\\
B_4^T&B_6&0_{q \times (n-p-q)}\\
B_5^T&0_{(n-p-q) \times q}&0_{n-p-q}
\end{array}\right),$$ where $B_6$ is a nonsingular diagonal matrix of dimension $q$ $\times$ $q$.

Applying the congruent matrix
$$Q_3:=\left(\begin{array}{ccc}
I_p&0_{p\times q}&0_{p \times (n-p-q)}\\
-B_6^{-1}B_4^T&I_q&0_{q \times (n-p-q)}\\
0_{(n-p-q) \times p}&0_{(n-p-q) \times q}&I_{n-p-q}
\end{array}\right)$$ to both $\hat{A}$ and $\hat{B}$ yields $\tilde{A}:=Q_3^T\hat{A}Q_3=\hat{A}=\bar{A}$ and
$$\tilde{B}:=Q_3^T\hat{B}Q_3 =
\left(\begin{array}{ccc}
B_1-B_4B_6^{-1}B_4^T&0_{p \times q}&B_5\\
0_{q \times p}&B_6&0_{q \times (n-p-q)}\\
B_5^T&0_{(n-p-q) \times q}&0_{n-p-q}
\end{array}\right),$$ which take exactly the forms in (\ref {MA}) and (\ref{MB}), respectively.

In summary, letting $U := Q_1Q_2Q_3$ yields the forms of $U^TAU$ and $U^TBU$ given in (\ref{MA}) and (\ref{MB}) in the lemma, respectively.
\end{proof}

We present next a theorem which extends the results in \cite{Uhlig} to situations where both matrices are singular.
\begin{theorem}\label{Two-SD}
Two singular matrices $A$ and $B$, which take the forms (\ref {MA}) and (\ref{MB}), respectively, are SD if and only if the Jordan normal form of $A_1^{-1}\mathcal{B}_1$ is diagonal and $\mathcal{B}_2$ is a zero matrix or $r$ = 0 ($\mathcal{B}_2$ does not exist).
\end{theorem}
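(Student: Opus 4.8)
The plan is to prove both directions by reducing the problem to the nonsingular case covered by Lemma~\ref{Lemma1} together with the block-decomposition machinery in Lemmas~\ref{lem2.3} and~\ref{lem2.4}. Throughout I would keep $A$ and $B$ fixed in the forms (\ref{MA}) and (\ref{MB}), with the $p\times p$ block $A_1$ nonsingular diagonal, $\mathcal{B}_3$ a $q\times q$ nonsingular diagonal block, and $\mathcal{B}_2$ of size $p\times r$ (when $r>0$).

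For the ``$\Leftarrow$'' direction, assume the Jordan normal form of $A_1^{-1}\mathcal{B}_1$ is diagonal and that $\mathcal{B}_2=0$ (or $r=0$). Then $\tilde B$ is block diagonal, $\tilde B=\diag(\mathcal{B}_1,\mathcal{B}_3,0_r)$, and $\tilde A=\diag(A_1,0_q,0_r)$. By Lemma~\ref{Lemma1} applied to the nonsingular matrix $A_1$, since the Jordan form of $A_1^{-1}\mathcal{B}_1$ is diagonal, $A_1$ and $\mathcal{B}_1$ are SD, say via a nonsingular $S$; in fact the explicit construction given in the paragraph following Lemma~\ref{lem2} produces such an $S$ with $S^TA_1S$ and $S^T\mathcal{B}_1S$ both diagonal. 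Also $\mathcal{B}_3$ is already diagonal, so $I_q^T\,0_q\,I_q=0_q$ and $I_q^T\mathcal{B}_3 I_q=\mathcal{B}_3$ are diagonal; and the $r\times r$ zero blocks are trivially diagonal. Hence the block-diagonal congruence $\diag(S, I_q, I_r)$ simultaneously diagonalizes $\tilde A$ and $\tilde B$, so $A$ and $B$ are SD.

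For the ``$\Rightarrow$'' direction, suppose $A$ and $B$ are SD. First I would rule out $\mathcal{B}_2\neq 0$ with $r>0$. If $\mathcal{B}_2\neq0$, let $s$ be its column rank; after a congruence acting only on the last $r$ coordinates (a block $\diag(I_p,I_q,W)$ with $W$ nonsingular chosen so that $\mathcal{B}_2W$ has its nonzero columns first) we may split the last block so that $\mathcal{B}_2$ becomes $(\hat{\mathcal{B}}_2\ \ 0)$ with $\hat{\mathcal{B}}_2$ of full column rank $s\le p$; the extra $0$ columns contribute a zero block to both $A$ and $B$ that is irrelevant to SD. We are then exactly in the situation of Lemma~\ref{lem2.3} (with the roles $A_1\leftarrow A_1$, $B_1\leftarrow\mathcal{B}_1$, $B_3\leftarrow\mathcal{B}_3$, $B_2\leftarrow\hat{\mathcal{B}}_2$, and the trailing zero block absorbed), provided $\mathcal{B}_1$ is nonsingular; Lemma~\ref{lem2.3} then says $A$ and $B$ cannot be SD, a contradiction. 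If $\mathcal{B}_1$ happens to be singular I would first note that, because $A_1$ is nonsingular, one can replace $\mathcal{B}_1$ by $\mathcal{B}_1+t A_1$ for a generic small $t$ without affecting SD (adding a multiple of $A$ to $B$), making $\mathcal{B}_1$ nonsingular while preserving the structure; the columns of $\mathcal{B}_2$ are untouched, so the contradiction still applies. Therefore $\mathcal{B}_2=0$ or $r=0$.

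Now $\mathcal{B}_2=0$, so $\tilde A=\diag(A_1,0_{q+r})$ and $\tilde B=\diag(\mathcal{B}_1,\mathcal{B}_3,0_r)$. The trailing $0_r$ blocks may be deleted simultaneously (they contribute nothing to any congruence), and $\diag(A_1,0_q)$ and $\diag(\mathcal{B}_1,\mathcal{B}_3)$ are SD. Here $A_1$ is nonsingular diagonal, $\mathcal{B}_3$ is nonsingular diagonal, and I would again make $\mathcal{B}_1$ nonsingular if necessary by the $t A_1$ shift above (noting that $A_1^{-1}(\mathcal{B}_1+tA_1)=A_1^{-1}\mathcal{B}_1+tI$ has diagonal Jordan form iff $A_1^{-1}\mathcal{B}_1$ does, so this shift is harmless for the conclusion we want). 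Then Lemma~\ref{lem2.4}, applied with $B_2\leftarrow\mathcal{B}_3$, gives that $A_1$ and $\mathcal{B}_1$ are SD. Finally, since $A_1$ is nonsingular, Lemma~\ref{Lemma1} tells us the Jordan normal form of $A_1^{-1}\mathcal{B}_1$ is diagonal, which after undoing the shift gives the same conclusion for the original $\mathcal{B}_1$.

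I expect the main obstacle to be the bookkeeping in the ``$\Rightarrow$'' direction: reconciling the hypotheses of Lemmas~\ref{lem2.3} and~\ref{lem2.4}, which require the relevant $p\times p$ block to be \emph{nonsingular}, with the fact that (\ref{MB}) only guarantees $A_1$ and $\mathcal{B}_3$ nonsingular, not $\mathcal{B}_1$. The device of passing from $B$ to $B+tA$ (equivalently $\mathcal{B}_1$ to $\mathcal{B}_1+tA_1$) for generic $t$ is what removes this gap, and one must check that it neither destroys the block structure of (\ref{MA})--(\ref{MB}) (it does not, since $A$ only has the $A_1$ block) nor changes whether $A_1^{-1}\mathcal{B}_1$ has diagonal Jordan form (it does not, since the shift changes the eigenvalues by $t$ but not the block sizes). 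Everything else is the routine multiplication of partitioned matrices already illustrated in the proofs of the preceding lemmas.
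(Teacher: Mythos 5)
Your proof is correct and takes essentially the same approach as the paper's: both hinge on the shift $B \mapsto B+\lambda A$ (so that $\lambda A_1+\mathcal{B}_1$ is nonsingular, e.g.\ by diagonal dominance), then reduce to Lemmas~\ref{Lemma1}, \ref{lem2.3} and \ref{lem2.4}, with the same device of compressing $\mathcal{B}_2$ to a full-column-rank block before invoking Lemma~\ref{lem2.3}. Your write-up is a bit more explicit in places---building the block congruence $\diag(S,I_q,I_r)$ directly in the ``$\Leftarrow$'' direction rather than chaining SD-equivalences, and spelling out that the shift leaves the Jordan structure of $A_1^{-1}\mathcal{B}_1$ unchanged---but these are presentational rather than substantive differences from the paper's argument.
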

\begin{proof}
Recall that $A$ and $B$ are SD if and only if $\tilde{A}$ and $\tilde{B}$ are SD
(The notations $\tilde{A}$ and $\tilde{B}$  are the same as those in Lemma \ref{claim2-1}.).  We can always choose a sufficiently large
$\lambda$ such that the first $p$ columns of $\tilde{B}+\lambda \tilde{A}$ are linearly independent. For example, by setting
$\lambda=\max_{i=1,\ldots,p}\sum_{j=1}^p |b_{ij}|/|a_{ii}|+1 $, where $a_{ii}$ is the $i$th diagonal entry of the nonsingular diagonal matrix
$A_1$ and $b_{ij}$ is the $(i,j)$th entry of $\mathcal{B}_1$ and noting that $A_1$ is diagonal,
$\lambda A_1+\mathcal{B}_1$ becomes diagonally dominant and is thus nonsingular.

If $\mathcal{B}_2$ does not exist or is a zero matrix, $\tilde{A}$ and $\tilde{B}$ are SD $\Leftrightarrow$ $\tilde{B}+\lambda \tilde{A}$ and $\tilde{A}$ are SD ($\lambda>0$) $\Leftrightarrow$ $\lambda A_1+\mathcal{B}_1$ and $A_1$ are SD $\Leftrightarrow$ $\mathcal{B}_1$ and $A_1$ are SD $\Leftrightarrow$ the Jordan normal form of $A_1^{-1}\mathcal{B}_1$ is diagonal, where the second ``$\Leftrightarrow$'' follows Lemma \ref{lem2.4}.

If the columns of $\mathcal{B}_2$ are linearly independent, according to Lemma \ref{lem2.3}, $\tilde{A}$ and $\lambda \tilde{A}+\tilde{B}$ are not SD $\Rightarrow$ $\tilde{ A}$ and $\tilde{B}$ are not SD $\Rightarrow$ $A$ and $B$ are not SD.

If the columns of $\mathcal{B}_2$ are linearly dependent, we can find a nonsingular congruent matrix
$$Q_4 := \left(\begin{array}{ccc}
I_p&0_{p \times q}&0_{p \times (n-p-q)}\\
0_{q \times p}&I_q&0_{q \times (n-p-q)}\\
0_{(n-p-q) \times p} &0_{(n-p-q) \times q}&T_{(n-p-q)\times(n-p-q)}
\end{array}\right)$$
such that $\check{A}:=Q_4^T\tilde{A}Q_4=\tilde{A}$ and
$$\check{B} := Q_4^T\tilde{B}Q_4=\left(\begin{array}{cccc}
\mathcal{B}_1&0_{p \times q}&\mathcal{B}_4&0_{p \times (n-p-q-s)}\\
0_{q \times p}&\mathcal{B}_3&0_{q \times s}&0_{q \times (n-p-q-s)}\\
\mathcal{B}_4^T&0_{s \times q}&0_s&0_{s \times (n-p-q-s)}\\
0_{(n-p-q-s) \times p}&0_{(n-p-q-s) \times q}&0_{(n-p-q-s) \times s}&0_{n-p-q-s}
\end{array}\right),$$ where $T_{(n-p-q)\times(n-p-q) }$ is the product of elementary matrices which makes
$$\mathcal{B}_2T_{(n-p-q)\times(n-p-q) }=(\mathcal{B}_4,0),$$ and $\mathcal{B}_4$ is a $p$ $\times$ $s$ ($s$ $<$ $n-p-q$) matrix with full
column rank. Then according to Lemma \ref{lem2.3}, similar to the case where $\mathcal{B}_2$ is of full column rank, $A$ and
$B$ are not SD.

We complete the proof of the theorem. \end{proof}

Algorithm \ref{alg1} provides us an algorithmic procedure to verify whether two matrices $A$ and $B$ are SD.
\begin{algorithm}
\caption{Procedure to check whether two matrices $A$ and $B$ are SD}
\label{alg1}
\begin{algorithmic}[1]
\REQUIRE Two symmetric $n\times n$ matrices $A$ and $B.$
\STATE Apply the spectral decomposition to $A$
 such that $\bar{A}:=Q_1^TAQ_1=\diag(A_1,0)$, where $A_1$ is a nonsingular diagonal matrix, and express $\bar{B}:=Q_1^TBQ_1=\left(\begin{array}{cc}
B_1&B_2\\
B_2^T&B_3
\end{array}\right)$
\STATE Apply the spectral decomposition to $B_3$ such that $V_1^TB_3V_1=\left(\begin{array}{cc}
B_6&0\\
0&0
\end{array}\right)$ and $\left(\begin{array}{cc}
I&0\\
0&V_{1}^T
\end{array}\right)Q_{1}^TBQ_{1}\left(\begin{array}{cc}
I&0\\
0&V_{1}
\end{array}\right)=\left(\begin{array}{ccc}
B_1&B_4&B_5\\
B_4^T&B_6&0\\
B_5^T&0&0
\end{array}\right)$, where $B_6$ is a nonsingular diagonal matrix and $(B_4~B_5)=B_2V_1$; Denote $Q_2:=\diag(I,V_{1})$
\STATE Set $\hat{A}=Q_2^T\bar{A}Q_2=\bar{A}$ and $\hat{B}:=Q_2^T\bar{B}Q_2=\left(\begin{array}{ccc}
B_1&B_4&B_5\\
B_4^T&B_6&0\\
B_5^T&0&0
\end{array}\right)$\IF {$B_5$ exists and $B_5\neq0$}
\RETURN ``not SD''\ELSE
\STATE Denote $Q_3:=\left(\begin{array}{ccc}
I_p&0_{p\times q}&\\
-B_6^{-1}B_4^T&I_q&\\
&&I_{n-p-q}
\end{array}\right)$; Further denote $\tilde{A}:=Q_3^T\hat{A}Q_3=\hat{A}=\bar{A}$,
$\tilde{B}:=Q_3^T\hat{B}Q_3=\left(\begin{array}{ccc}
B_1-B_4B_6^{-1}B_4^T&0&\\
0&B_6&\\
&&0
\end{array}\right)$
\IF {the Jordan normal form of $A_1^{-1}(B_1-B_4B_6^{-1}B_4^T)$ is a diagonal matrix, i.e., $V_2^{-1}A_1^{-1}(B_1-B_4B_6^{-1}B_4^T)V_2=\diag(\lambda_1I_{n_1},\ldots,\lambda_tI_{n_t}),$
where $V_2$ is nonsingular}
\STATE  Find $R_k$, $k=1,\ldots,t$, which is a spectral decomposition matrix of the $k$th diagonal block of $V_2^{T}A_1V_2$; Denote
 $R:=\diag(R_1,\ldots,R_{t})$, $Q_4:=\diag(V_2R,I_{q+r})$ and $P:=Q_1Q_2Q_3Q_4$
\RETURN diagonal matrices $Q_4^T\tilde{A}Q_4$ and $Q_4^T\tilde{B}Q_4$ and  congruent matrix $P$
\ELSE
\RETURN``not SD''
\ENDIF
\ENDIF
\end{algorithmic}
\end{algorithm}

We next demonstrate the computational procedure to find the congruent matrix in checking whether a pair of matrices are SD via an illustrative example. The notations in the example follow the ones in Algorithm 1.

\textbf{Example 1}

Let $A:=\left(\begin{array}{cccccc}
1& 0& 0& 0& 0& 0\\
0& 4& 0& 0& 0& 0\\
0& 0& 9& 0& 0& 0\\
0& 0& 0& 0& 0& 0\\
0& 0& 0& 0& 0& 0\\
0& 0& 0& 0& 0& 0
\end{array}\right)$ and
$B:=\left(\begin{array}{cccccc}
1& 2& 0& 0& 3& 0\\
2& 5& 1& 0& 0& 0\\
0& 1& 7& 0& 0& 0\\
0& 0& 0& 2& 2& 0\\
3& 0& 0& 2& 5& 0\\
0& 0& 0& 0& 0& 0
\end{array}\right)$.
Applying the following congruent matrix which is derived in the appendix,
$$P:=\left(\begin{array}{cccccc}
3.98 & 194.23  & -0.21&0&0&0\\
    7.09 & -27.55  & -0.29&0&0&0\\
    1 &   1  &  1   &          0 &           0        &      0\\
3.98 &  194.22&   -0.21& -0.45      &      -0.89         &      0\\
-3.98&  -194.22&   0.21&   -0.89    &         0.45       &       0\\
0    &   0     &     0 &         0  &             0      & 1
\end{array}\right),$$
we obtain two diagonal matrices, $$P^TAP=\diag(226,40772,9,0,0,0),$$ and $$P^TBP=\diag(354,-93111,7,6,1,0).$$
\begin{rmk}
Computational methods for obtaining the Jordan normal form that are of complexity $O(n^3)$ can be found in \cite{BD} and \cite{GW}. One computational problem in computing the SD form for two matrices is the unstability of the numerical methods for Jordan normal form, see Chapter
7 in \cite{MC}, \cite{GW} and  \cite{K}. As a small perturbation in a problem setting may cause significant change in the formation of the Jordan blocks, it becomes difficult to deal with large-scale problems, see \cite{BD}.
 Our situation, however, is special, as we only need to calculate the Jordan normal form for real eigenvalues and we stop when the block size of any Jordan block is larger than 1 (implying that the two matrices are not SD). On the other hand, several symbolic computational methods have been developed to calculate the exact Jordan normal form (Please refer to \cite{LZW} and \cite{RV}).
\end{rmk}

\subsection{Applications to the generalized trust region subproblem and its variants}
Recall that the TRS is a special QCQP problem with one quadratic constraint:
\begin{eqnarray*}
{\rm (TRS)}~~~
&  \min  &\frac{1}{2}x^TBx+a^Tx \\
&  {\rm s.t.}&\Vert x\Vert_2 \leq1,
\end{eqnarray*}
where $B$ is an $n\times n$ symmetric matrix and $a \in \Re^n$. Numerous solution methods for problem formulation (TRS) have been developed in the literature, see for example, \cite{more1983computing} and \cite{rendl1997semidefinite}.

 We consider the following  extension of (TRS), which is called the generalized trust region subproblem (GTRS) in the literature:
\begin{eqnarray*}
{\rm (GTRS)}~~~
& \min  & \frac{1}{2}x^TBx+a^Tx \\
& {\rm s.t.}& \frac{1}{2}x^TAx+c^Tx+ \frac{1}{2}d \leq 0,
\end{eqnarray*}
where $A$ and $B\in S^n$ are $n\times n$ symmetric matrices, and $a,c,x \in \Re^n$ and $d\in \Re$.
Sturm and Zhang \cite{St2} prove the equivalence between the primal problem and its SDP relaxation for the GTRS. However, the state-of-the-art of SDP solvers do not support effective implementation in solving large scale GTRSs. Several other methods have been proposed for solving large scale instances of GTRSs under different conditions, see \cite{B1}, \cite{M} and \cite{S}.

The SD condition developed above in Section 2.1 could find its immediate applications in the GTRS. When the two matrices $A$ and $B$ are SD, the problem is equivalent to the following SOCP problem as showed in \cite{B1}:
\begin{eqnarray*}
& \min & \delta^Ty+\epsilon^Tx\\
& {\rm s.t.} &\alpha^Ty+\beta^Tx+ \frac{1}{2}d \leq0,\\
&&\frac{1}{2}x_i^2\leq y_i,~i=1,\ldots,n,
\end{eqnarray*}
where $\delta$, $\alpha$, $\epsilon$, $\beta \in \Re^n$, $\delta=\Dg(P^TBP)$, $\alpha=\Dg(P^TAP)$, $\epsilon=P^Ta$, $\beta=P^Tb$
and $P$ is the congruent matrix that makes both $P^TAP$ and $P^TBP$ diagonal. As SOCP reformulation can be solved much faster than SDP reformulation, it becomes possible to solve large scale GTRSs using SOCP reformulation when a problem is identified to be SD. Algorithm 1 actually enables us to verify whether a given instance of the GTRS is equivalent to its SOCP relaxation. Note that if the inequality in the GTRS becomes equality, the above analysis still holds true, see \cite{B1}.

Problem (TRS) is always SD, as the Jordan normal form of $I^{-1}B=B$ is a diagonal matrix (due to that the Jordan normal form of the symmetric matrix $B$ is diagonal). Thus, problem (TRS) is always equivalent to its SOCP relaxation.

When $B-\lambda A\succ0$ holds for some $\lambda\in \Re,$
  this instance of (GTRS) is classified as a regular case \cite{S}.  It is clear that all regular cases are SD, since $A$ and $B$ under the above regularity condition are SD as discussed in Section 1. We thus claim  that any regular case of the GTRS has an equivalent SOCP relaxation. We can further conclude that the applicability of the SD approach is wider than the existing methods based on regularity conditions.

 SD condition can also be applied to solve the following interval bounded GTRS:
\begin{eqnarray*}
{\rm (IGTRS)}~~~
&  \min& \frac{1}{2}x^TBx+a^Tx \\
&  {\rm s.t.}& l\leq\frac{1}{2}x^TAx+c^Tx \leq u,
\end{eqnarray*}
where lower and upper bounds $l$ and $u\in \Re$ are chosen such that $-\infty<l<u<+\infty$.
For regular cases of (IGTRS), \cite{W}  extends the results in \cite{S} to the interval bounded GTRS. Adopting Algorithm 1 developed in this paper
to identify whether a given instance of (IGTRS) is SD, we can verify whether the problem can be reduced to an SOCP problem based on the the results in \cite{B1} for QCQP with two quadratic constraints, which will be discussed in Section 4.
We emphasize that the regularity condition implies SD. Thus, the applicability of our approach is wider than the results in \cite{W}.

%

\section{Simultaneous diagonalization for finite number of matrices}

We now develop a method of checking simultaneous diagonalization for $m$ symmetric matrices $A_1,A_2,\ldots,A_m$, which have a semi-definite matrix pencil, i.e.,
there exists $\lambda\in \Re^m$ with $\lambda\neq0$, such that $\lambda_1A_1+\lambda_2A_2+\cdots+\lambda_mA_m\succeq0$. Note that this condition is always assumed in the SDP relaxation for a QCQP problem; Otherwise the SDP relaxation is unbounded from below, see \cite{KS,St}.

It is well known from \cite{HJ} that, if two matrices $A$ and $B$ commute, they are SD. It is interesting to extend this sufficient condition to a necessary and sufficient condition. The results in the following lemma have been mentioned in \cite{B1}. We provide its detailed proof in the following, which will be used to prove its extension, Theorem \ref{Theorem3-2}.
\begin{lemma}\label{lemma1}
Suppose that $A$ and $B$ are two symmetric $n\times n$ matrices, and $I$ is the $n\times n$ identity matrix. Then $A$, $B$ and $I$ are SD via an orthogonal congruent matrix if and only if $A$ and $B$ commute.
\end{lemma}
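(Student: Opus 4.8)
The plan is to prove both directions, with the "if" direction being the substantive one. For the "only if" direction, suppose $A$, $B$ and $I$ are SD via an orthogonal matrix $Q$, so $Q^TAQ = D_A$, $Q^TBQ = D_B$ are diagonal and $Q^TIQ = I$. Since $Q$ is orthogonal, $Q^T = Q^{-1}$, hence $A = QD_AQ^T$ and $B = QD_BQ^T$. Then $AB = QD_AQ^TQD_BQ^T = QD_AD_BQ^T = QD_BD_AQ^T = BA$ because diagonal matrices commute. This is immediate.

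For the "if" direction, suppose $A$ and $B$ commute. First I would apply the spectral theorem to $A$: there is an orthogonal matrix $U$ with $U^TAU = \diag(\mu_1 I_{m_1}, \ldots, \mu_k I_{m_k})$, grouping equal eigenvalues into blocks. Since $AB = BA$, we get $(U^TAU)(U^TBU) = (U^TBU)(U^TAU)$, i.e. $U^TBU$ commutes with a block-diagonal matrix whose blocks are distinct scalar multiples of identities. A standard argument shows that any matrix commuting with such a matrix must itself be block diagonal with the same partition: if $\tilde B = U^TBU$ has block $(i,j)$ entry $\tilde B_{ij}$, then $\mu_i \tilde B_{ij} = \tilde B_{ij}\mu_j$, so $\tilde B_{ij} = 0$ whenever $\mu_i \neq \mu_j$. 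Hence $U^TBU = \diag(B_1, \ldots, B_k)$ where each $B_i$ is $m_i \times m_i$ and symmetric (since $B$ is symmetric, each diagonal block is symmetric). Now apply the spectral theorem to each $B_i$ separately: there is an orthogonal $V_i$ with $V_i^TB_iV_i$ diagonal. Set $V := \diag(V_1, \ldots, V_k)$, which is orthogonal, and let $Q := UV$. Then $Q$ is orthogonal, $Q^TIQ = I$, $Q^TBQ = V^T(U^TBU)V = \diag(V_1^TB_1V_1, \ldots, V_k^TB_kV_k)$ is diagonal, and $Q^TAQ = V^T \diag(\mu_1 I_{m_1}, \ldots, \mu_k I_{m_k}) V = \diag(\mu_1 I_{m_1}, \ldots, \mu_k I_{m_k})$ is still diagonal because $V_i$ being orthogonal preserves the scalar blocks $\mu_i I_{m_i}$. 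So $A$, $B$, $I$ are SD via the orthogonal $Q$.

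The main (and really the only) obstacle is the block-commutation lemma: showing that a matrix commuting with $\diag(\mu_1 I_{m_1}, \ldots, \mu_k I_{m_k})$ with distinct $\mu_i$ must share that block structure. This is routine linear algebra — comparing the $(i,j)$ blocks of both products of the block multiplication and using $\mu_i \neq \mu_j$ — but it should be stated carefully since it is the crux of why commuting plus symmetry yields simultaneous orthogonal diagonalization. Everything else (two applications of the spectral theorem, the fact that orthogonal conjugation fixes $I$ and fixes scalar-multiple-of-identity blocks) is bookkeeping. I would also note explicitly that the diagonal blocks $B_i$ inherit symmetry from $B$ so that the second application of the spectral theorem is legitimate.
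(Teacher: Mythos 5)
Your proof is correct and follows essentially the same route as the paper's: for the ``if'' direction, diagonalize $A$ orthogonally into scalar blocks, observe that commutation forces $U^TBU$ to be block diagonal with the same partition, then diagonalize each symmetric block separately. The only cosmetic difference is in the ``only if'' direction, where the paper starts from an arbitrary (not a priori orthogonal) congruence achieving SD and first normalizes it to an orthogonal one via $P^TIP$ being diagonal positive definite, thereby proving the slightly stronger fact that SD via any congruence already forces commutativity; your version simply takes the orthogonal hypothesis as given, which suffices for the lemma as stated.
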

\begin{proof}
``$\Rightarrow$'' part: If $A$, $B$ and $I$ are SD, then there exists a nonsingular matrix $P$ such that $P^TAP$, $P^TBP$ and $P^TIP$ are diagonal.
Since $P^TIP$ is positive definite and diagonal, so there exists a diagonal matrix $Q$ such that $Q^TP^TIPQ=I$.
Let $U=PQ$, then $U^TAU$, $U^TBU$ and $U^TIU$ are all diagonal and $U^TIU=I$.
So $U$ is an orthogonal matrix. Furthermore, $U^TAU$ and $U^TBU$ are diagonal $\Rightarrow$ $U^TAUU^TBU=U^TBUU^TAU$ $\Rightarrow$ $AB=BA$, i.e., $A$ and $B$ commute.

``$\Leftarrow$'' part: Let $P$ be the spectral decomposition matrix for $A$ such that $P^TP=I$ and $P^TAP$ is diagonal. If $A$ and $B$ commute,
i.e., $AB=BA$, then by $PP^T=I$ we obtain $P^TAPP^TBP=P^TBPP^TAP$.
We can always assume $P$ to be a nonsingular matrix such that $P^TAP=\diag(\lambda_1I_{n_1},\lambda_2I_{n_2},\ldots,\lambda_kI_{n_k})$, where $k$ $\leq$ $n$, $n_k$ is the algebraic multiplicity of $\lambda_i$.
So, $P^TAPP^TBP=P^TBPP^TAP$ $\Rightarrow$ $P^TBP=\diag(B_1,B_2,\ldots,B_k)$ is a block diagonal matrix and $\dim(B_i)=\dim(I_{n_i})$, $\forall1\leq i\leq k$.
Using    the spectral decomposition to all $B_i$, we obtain $k$ diagonal matrices $Q_i^TB_iQ_i$, where  $Q_i^TQ_i=I_{n_i}$, and $\dim(Q_i)=\dim(I_{n_i})$, $\forall1\leq i\leq k$. By denoting $Q=\diag(Q_1,Q_2,\ldots,Q_k)$,  $Q^TP^TBPQ$ and $Q^TP^TAPQ$ are both diagonal matrices.
Besides, $Q^TP^TIPQ=I$. Thus, $A$, $B$ and $I$ are SD.
\end{proof}

One by-product of Lemma \ref{lemma1} is its applicability in solving the following CDT problem \cite{CDT}:
\begin{eqnarray*}
&  \min & \frac{1}{2}x^TBx+a^Tx \\
&  {\rm s.t.} &\parallel x\parallel_2 \leq1,\\
&&\parallel A^Tx+c\parallel_2 \leq1.
\end{eqnarray*}
Note that we can view the matrix in the quadratic term of the first inequality in the above CDT problem as $I$. We can then use Lemma \ref{lemma1}  to identify if the three matrices are SD, i.e., if $B$ and $AA^T$ commute. If so, we can then apply the method in \cite{B1} to solve the problem if one of the KKT multipliers is $0$.

The following theorem shows the SD condition for the identity matrix $I$ and other $m$ symmetric matrices when the congruent matrix is orthogonal, which is a classical result in linear algebra, see \cite{HJ}. For the sake of completeness, we give a brief proof here.
\begin{theorem}\label{Theorem3-2}
Matrices $I,A_1,A_2,\ldots,A_m$ are SD if and only if $A_i$ commutes with $A_j$, $\forall$$i,j$ = 1,2,$\ldots$, $m$, $i\neq j$.
\end{theorem}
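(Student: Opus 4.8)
The plan is to promote the two-matrix argument behind Lemma~\ref{lemma1} to $m$ matrices by induction on $m$, with the orthogonality of the congruent matrix being the feature that makes the induction go through.

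For the ``only if'' direction I would argue exactly as in Lemma~\ref{lemma1}: if $P$ is nonsingular with $P^TIP$, $P^TA_1P,\ldots,P^TA_mP$ all diagonal, then $P^TIP=P^TP$ is diagonal and positive definite, so rescaling by the diagonal matrix $Q$ whose entries are the reciprocal square roots of those of $P^TP$ produces $U:=PQ$ with $U^TU=I$ and every $U^TA_iU$ still diagonal. Since diagonal matrices commute pairwise, $(U^TA_iU)(U^TA_jU)=(U^TA_jU)(U^TA_iU)$; using $UU^T=I$ this reads $U^TA_iA_jU=U^TA_jA_iU$, and cancelling $U$ gives $A_iA_j=A_jA_i$ for all $i\neq j$.

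For the ``if'' direction I would induct on $m$. The case $m=1$ is the spectral theorem. Assume $I,A_1,\ldots,A_{m-1}$ are simultaneously diagonalized by an orthogonal $U_0$, and put $D_i:=U_0^TA_iU_0$ (diagonal) for $i\le m-1$ and $B:=U_0^TA_mU_0$ (symmetric). Because $A_m$ commutes with each $A_i$, $B$ commutes with each $D_i$. Partition the index set $\{1,\ldots,n\}$ by declaring two indices equivalent when all of $D_1,\ldots,D_{m-1}$ have equal diagonal entries there, and, after an orthogonal permutation, take the resulting classes to be consecutive blocks of sizes $n_1,\ldots,n_k$. On the $\ell$th block each $D_i$ equals a scalar multiple of $I_{n_\ell}$, while for indices in different blocks at least one $D_i$ separates them; writing $B$ in this block form and reading off $BD_i=D_iB$ then forces every off-diagonal block of $B$ to vanish, as in the block-diagonal reasoning of Lemma~\ref{lem2} and the ``$\Leftarrow$'' part of Lemma~\ref{lemma1}. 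Hence $B=\diag(B_1,\ldots,B_k)$ with each $B_\ell$ symmetric. Diagonalizing each $B_\ell$ by an orthogonal $Q_\ell$ and setting $Q:=\diag(Q_1,\ldots,Q_k)$, the matrix $U_0Q$ is orthogonal, leaves each $D_i$ diagonal (conjugating the scalar block $d_{i\ell}I_{n_\ell}$ by $Q_\ell$ returns $d_{i\ell}I_{n_\ell}$), and turns $B$ into the diagonal matrix $\diag(Q_1^TB_1Q_1,\ldots,Q_k^TB_kQ_k)$; thus $I,A_1,\ldots,A_m$ are SD, closing the induction.

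The one step that needs genuine care is the block-diagonalization in the inductive step: making precise that a symmetric matrix commuting with the already-diagonalized $D_1,\ldots,D_{m-1}$ must be block diagonal with respect to the common refinement of their eigenspace decompositions, and checking that the ensuing block-diagonal orthogonal conjugation does not disturb the matrices diagonalized at the previous stage. The remaining ingredients — the rescaling-to-orthogonal trick and the spectral theorem — are routine.
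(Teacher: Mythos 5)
Your proof is correct, and the ``only if'' direction coincides with the paper's argument verbatim. The ``if'' direction is also an induction on $m$ using orthogonal congruence, but the induction runs in the opposite direction: you \emph{accumulate} matrices, first simultaneously diagonalizing $I,A_1,\ldots,A_{m-1}$ by the inductive hypothesis and then fitting $A_m$ into the common refinement of the eigenspace partitions of $D_1,\ldots,D_{m-1}$, whereas the paper \emph{peels off} $A_1$: commutation with $A_1=\diag(\lambda_1 I_{n_1},\ldots,\lambda_k I_{n_k})$ alone forces each $\tilde A_i$ ($i\ge 2$) to be block diagonal, and the induction is then applied to the $m-1$ restricted matrices on each block. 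Both are sound. The paper's variant needs only the elementary fact that a matrix commuting with one diagonal matrix of the above form is block diagonal, and it recurses on strictly smaller blocks with fewer matrices; your variant stays at dimension $n$ but requires the (still elementary, and correctly handled by you) stronger observation that a symmetric matrix commuting with a whole family of diagonal matrices is block diagonal with respect to their common refinement. The step you flagged as needing care is exactly the $B_{rs}\bigl((D_i)_{ss}-(D_i)_{rr}\bigr)=0$ computation, which you have in substance; there is no gap.
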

\begin{proof}
Note that $I,A_1,A_2,\ldots,A_m$ are SD if and only if there exists a matrix $P$ such that $P^TP$ = $I$, and $P^TA_1P,\cdots,P^TA_mP$ are all diagonal matrices. Thus, $P$ is orthogonal.

We first prove the ``$\Rightarrow$'' part. Since there is an orthogonal matrix $P$, which makes $P^TA_iP$, $i$ = 1,$\ldots$, $m$, all diagonal, $P^TA_iPP^TA_jP=P^TA_jPP^TA_iP$. So $P^TA_iA_jP=P^TA_jA_iP$, and, thus, $A_iA_j=A_jA_i$, $i, j$ = 1, $\ldots$, $m$, $i$ $\neq$ $j$.

Next we prove the ``$\Leftarrow$'' part. As the case where $m = 2$ has already been proved in Lemma \ref{lemma1}, we will use the induction principle to prove the general case.
Suppose that $Q_1^TA_1Q_1$ is the spectral decomposition of $A_1$ such that $Q_1^TQ_1$ = $I$ and $Q_1^TA_1Q_1=\diag(\lambda_1I_{n_1},\lambda_2I_{n_2},\ldots,\lambda_kI_{n_k})$,
with $1\leq$ $k$ $\leq$ $n$ and $\lambda_i$ $\neq$ $\lambda_j$, if $i$ $\neq$ $j$, $1\leq i,j\leq k$.
Then,  $A_1A_i=A_iA_1$ $\Rightarrow$  $Q_1^TA_1Q_1Q_1^TA_iQ_1=Q_1^TA_iQ_1Q_1^TA_1Q_1$ $\Rightarrow$ $\tilde{A}_i := Q_1^TA_iQ_1$ is a block diagonal matrix in the form of
$\diag(\tilde{A}_{i_1},\tilde{A}_{i_2},$ $\cdots,\tilde{A}_{i_k})$, with $\dim(\tilde{A}_{i_l})=\dim(I_{n_{l}})$, $i=1,\ldots,m$, $l=1, 2, \ldots, k$. Furthermore,
$$A_iA_j=A_jA_i\Rightarrow Q_1^TA_iQ_1Q_1^TA_jQ_1=Q_1^TA_jQ_1Q_1^TA_iQ_1.$$
So $\tilde{A}_{i_l}$ commutes with $\tilde{A}_{j_l}$, $\forall2\leq i,j\leq m,~ l=1,2,\cdots,k$. Now for every $l=1,2,\cdots,k$, the case reduces to prove if  $m-1$ matrices $\tilde{A}_{i_l}$, 2 $\leq$ $i$ $\leq$ $m$, commute with each other, then they are SD by some orthogonal matrix $R_l$. Note that the congruent matrix, $R=\diag(R_1,\ldots,R_k)$ has the same block structure as $\tilde{A}_i$. According to the induction principle
and Lemma \ref{lemma1}, which represents the case $m=2$, we complete the proof.
\end{proof}

Theorem \ref{Theorem3-2} also gives a recursion procedure to identify if $I,A_1,A_2,\ldots,A_m$ are SD.

By applying  Theorem \ref{Theorem3-2}, we have the following theorem.\begin{theorem}\label{Theorem3-3}
If there exists $\lambda\in \Re^m$ such that $\lambda_1A_1+\lambda_2A_2+\cdots+\lambda_mA_m\succ0$, where, without loss of generality,
$\lambda_m$ is assumed not to be zero, then $A_1,A_2,\ldots, A_m$ are SD if and only if $P^TA_iP$ commute with $P^TA_jP$,
$\forall i\neq j$, $1\leq$ $i,~j$ $\leq$ $m-1$, where P is any nonsingular matrix that makes $P^T(\lambda_1A_1+\lambda_2A_2+\cdots+\lambda_mA_m)P=I.$
\end{theorem}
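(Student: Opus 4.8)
The plan is to reduce the statement to Theorem~\ref{Theorem3-2} (the commuting criterion for $I, B_1, \ldots, B_{m}$) by two elementary observations. Write $M := \lambda_1 A_1 + \cdots + \lambda_m A_m \succ 0$ and fix a nonsingular $P$ with $P^T M P = I$; such a $P$ exists because a spectral decomposition $M = V\Lambda V^T$ with $\Lambda$ diagonal and positive definite gives $P = V\Lambda^{-1/2}$. The first observation is that $A_1, \ldots, A_m$ are SD if and only if $A_1, \ldots, A_m, M$ are SD: the nontrivial direction is immediate since $M = \sum_i \lambda_i A_i$, so if $Q^T A_i Q$ is diagonal for every $i$ then $Q^T M Q = \sum_i \lambda_i Q^T A_i Q$ is a sum of diagonal matrices, hence diagonal. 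The second observation is that $A_1, \ldots, A_m, M$ are SD if and only if $P^T A_1 P, \ldots, P^T A_m P, I$ are SD, because simultaneous diagonalizability via congruence is preserved under the fixed nonsingular congruence $P$: if $R^T(P^T A_i P)R$ and $R^T(P^T M P)R = R^TR$ are all diagonal, then $(PR)^T A_i(PR)$ and $(PR)^T M(PR)$ are all diagonal, and conversely. Combining these with Theorem~\ref{Theorem3-2} applied to $I, P^T A_1 P, \ldots, P^T A_m P$ shows that $A_1, \ldots, A_m$ are SD if and only if $P^T A_i P$ commutes with $P^T A_j P$ for all $1 \le i, j \le m$, $i \neq j$.

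It remains to explain why commutation over the full index range $1 \le i,j \le m$ can be weakened to $1 \le i,j \le m-1$, which is the only step requiring a small argument and which is exactly where $\lambda_m \neq 0$ is used. From $P^T M P = I$ we get
$$P^T A_m P = \frac{1}{\lambda_m}\Bigl(I - \sum_{i=1}^{m-1} \lambda_i\, P^T A_i P\Bigr),$$
so $P^T A_m P$ is a linear combination of $I$ and $P^T A_1 P, \ldots, P^T A_{m-1} P$. Consequently, if $P^T A_1 P, \ldots, P^T A_{m-1} P$ pairwise commute (they automatically commute with $I$), then $P^T A_m P$ commutes with each of them, and so all $m$ transformed matrices pairwise commute; the converse implication is trivial. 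Chaining this with the equivalences of the previous paragraph yields the theorem, and it also shows the criterion is independent of the particular valid choice of $P$, since the left-hand side ``$A_1, \ldots, A_m$ are SD'' is.

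I do not anticipate a serious obstacle here: the whole content is recognizing the reduction ``SD of $\{A_i\}$ $\Leftrightarrow$ SD of $\{A_i, M\}$ $\Leftrightarrow$ SD of $\{P^T A_i P, I\}$'' and then invoking Theorem~\ref{Theorem3-2}. The one place needing care is the last step, i.e., justifying that commutation of the first $m-1$ matrices $P^T A_i P$ suffices (via the linear-combination identity above, which genuinely needs $\lambda_m \neq 0$), together with the minor bookkeeping that a congruence $P$ with $P^T M P = I$ exists and that the resulting criterion does not depend on which such $P$ is picked.
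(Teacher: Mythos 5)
Your proposal is correct and follows essentially the same route as the paper: reduce to the commuting criterion of Theorem~\ref{Theorem3-2} by congruence with a $P$ satisfying $P^T(\sum_i\lambda_iA_i)P=I$, and use $\lambda_m\neq 0$ to express $P^TA_mP$ as a linear combination of $I$ and $P^TA_1P,\ldots,P^TA_{m-1}P$. The only cosmetic difference is the order of operations: the paper first replaces the set $\{A_1,\ldots,A_m\}$ by $\{A_1,\ldots,A_{m-1},\sum_i\lambda_iA_i\}$ and then invokes Theorem~\ref{Theorem3-2} on the latter set, so the index range $1\le i,j\le m-1$ appears immediately; you instead apply Theorem~\ref{Theorem3-2} to all $m$ transformed matrices and afterward weaken the commutation requirement from $1\le i,j\le m$ to $1\le i,j\le m-1$ using the same linear-combination identity. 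Both are sound and rely on the same ingredients.
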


\begin{proof}
The matrix $\lambda_1A_1+\lambda_2A_2+\cdots+\lambda_mA_m$ is positive definite and thus there exists an orthogonal  matrix $U$ such that
$D=U^T( \lambda_1A_1+\lambda_2A_2+\cdots+\lambda_mA_m)U$ is a diagonal matrix. Denote $D^\frac{1}{2}=\diag(D_{11}^\frac{1}{2},D_{22}^\frac{1}{2},\ldots,D_{nn}^\frac{1}{2})$,
and then $D^\frac{1}{2}U^T( \lambda_1A_1+\lambda_2A_2+\cdots+\lambda_mA_m)UD^\frac{1}{2}=I$.
Note that $A_1,\ldots,A_{m-1}, \lambda_1A_1+\lambda_2A_2+\cdots+\lambda_mA_m$ are SD if and only if
$P^TA_1P,\ldots, P^TA_{m-1}P,P^T(\lambda_1A_1+\lambda_2A_2+\cdots+\lambda_mA_m)P$ are SD, and from Theorem \ref{Theorem3-2}, we conclude that
$P^TA_1P,\ldots,P^TA_{m-1}P, P^T(\lambda_1A_1+\lambda_2A_2+\cdots+\lambda_mA_m)P$ are SD if and only if $P^TA_iP$ commutes  with $P^TA_jP$, $\forall i\neq j$,
$1\leq$ $i,~j$ $\leq$ $m-1$, where $P=UD^{-\frac{1}{2}}$ and $P^T(\lambda_1A_1+\lambda_2A_2+\cdots+\lambda_mA_m)P=I.$
So we only need to prove that  $A_1,A_2,\cdots, A_m$ are SD $\Leftrightarrow$ $A_1,A_2,\ldots, A_{m-1},\lambda_1A_1+\lambda_2A_2+\cdots+\lambda_mA_m$ are SD.

We first prove the ``$\Rightarrow$'' part: If $A_1,A_2,\ldots, A_m$ are SD,  then  there exist a nonsingular matrix $P$ such that $P^TA_1P,~P^TA_2P,~\ldots,~P^TA_mP$ are all diagonal matrices.
Thus,  $\lambda_1P^TA_1P+\lambda_2P^TA_2P+\cdots+\lambda_mP^TA_mP$ is a diagonal matrix. We therefore obtain that
$A_1,A_2,\ldots,~A_{m-1},\lambda_1A_1+\lambda_2A_2+\cdots+\lambda_mA_m$ are SD.

 We prove next the ``$\Leftarrow$'' part: If $A_1,A_2,\ldots, A_{m-1}, \lambda_1A_1+\lambda_2A_2+\cdots+\lambda_mA_m$ are SD,
 then there exists a nonsingular matrix $P$ such that $P^TA_1P,~P^TA_2P,~\ldots,~\\P^TA_{m-1}P,P^T(\lambda_1A_1+\lambda_2A_2+\cdots+\lambda_mA_m)P$ are all diagonal. Since $\lambda_m$ is not zero, $P^TA_mP=[P^T(\lambda_1A_1+\lambda_2A_2+\cdots+\lambda_mA_m)P-\sum_{i=1}^{m-1}\lambda_i P^TA_iP]/\lambda_m$ is also diagonal.
\end{proof}

For the semi-definite matrix pencil case, we have the following results.
\begin{theorem}\label{Theorem3-4}
Suppose that $A_1,A_2,\ldots,A_m$ and $B$ are  $n\times n$ symmetric matrices with the following forms,
$$A_i:=\left(\begin{array}{cc}
A_i^1&A_i^2\\
(A_i^2)^T&A_i^3\\\end{array}\right), {\rm~and~} B:=\left(\begin{array}{cc}
I_p&\\
&0\\\end{array}\right),$$ where $\dim A_i^1=\dim I_p=p<n$, $A_i^3$ is diagonal, $\forall1\leq i\leq m$, and $A_1^3$ is nonsingular.
Then $A_1,A_2,\ldots,A_m$ and $B$  are SD if and only if the following two conditions are satisfied:\\
1. $A_i^2=A_1^2(A_1^3)^{-1}A_i^3$, $i=2,\ldots,m$.\\
2. $A_i^1-A_i^2(A_1^3)^{-1}(A_1^2)^T$, $i=1,\ldots,m$, mutually commute.
\end{theorem}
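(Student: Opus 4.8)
I would prove the two implications separately, but in both directions the key device is a single block--lower--triangular congruence that uses the invertibility of $A_1^3$ to clear all the off--diagonal blocks $A_i^2$ while leaving $B$ unchanged, after which the problem reduces to the orthogonal simultaneous diagonalization of a commuting family of $p\times p$ symmetric matrices. Throughout I use the $p,(n-p)$ block partition and write $C_i:=A_i^1-A_i^2(A_1^3)^{-1}(A_1^2)^T$ for $i=1,\dots,m$; note $C_i$ is symmetric once Condition~1 holds, since then $A_i^2(A_1^3)^{-1}(A_1^2)^T=A_1^2(A_1^3)^{-1}A_i^3(A_1^3)^{-1}(A_1^2)^T$ is symmetric.

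\textbf{Sufficiency.} Assume Conditions 1 and 2. Put $U_1:=\left(\begin{array}{cc}I_p&0\\-(A_1^3)^{-1}(A_1^2)^T&I_{n-p}\end{array}\right)$. A direct block multiplication gives $U_1^TBU_1=B$ (since $B$ has zero lower block), and, using Condition~1 together with its transpose to cancel the upper--right and lower--left blocks, $U_1^TA_iU_1=\diag(C_i,A_i^3)$ for every $i$. Since the $C_i$ mutually commute by Condition~2, Theorem~\ref{Theorem3-2} applied to $I_p,C_1,\dots,C_m$ yields a $p\times p$ orthogonal matrix $W$ with every $W^TC_iW$ diagonal. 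Then $U_2:=\diag(W,I_{n-p})$ satisfies $U_2^TBU_2=B$ and sends each $\diag(C_i,A_i^3)$ to $\diag(W^TC_iW,A_i^3)$, which is diagonal because $A_i^3$ is diagonal by hypothesis. Hence $P:=U_1U_2$ simultaneously diagonalizes $B,A_1,\dots,A_m$.

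\textbf{Necessity.} Suppose $P$ is nonsingular and $P^TBP,P^TA_1P,\dots,P^TA_mP$ are all diagonal. As $B\succeq0$ has rank $p$, the diagonal matrix $P^TBP$ has exactly $p$ positive entries; post--multiplying $P$ by a permutation and a positive diagonal matrix (which keeps every $P^TA_iP$ diagonal) I may assume $P^TBP=B$. The equation $P^TBP=\diag(I_p,0)$ then forces $P=\left(\begin{array}{cc}P_1&0\\P_3&P_4\end{array}\right)$ with $P_1$ orthogonal and $P_4$ nonsingular. Expanding $P^TA_iP$ and imposing that its off--diagonal block $(P_1^TA_i^2+P_3^TA_i^3)P_4$ vanish gives $A_i^2=-P_1P_3^TA_i^3$ for all $i$; the case $i=1$ with $A_1^3$ invertible gives $P_1P_3^T=-A_1^2(A_1^3)^{-1}$, i.e. $P_3=-(A_1^3)^{-1}(A_1^2)^TP_1$, and substituting back yields Condition~1. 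Inserting this $P_3$ into the diagonal upper--left block of $P^TA_iP$ and simplifying with Condition~1 collapses that block to $P_1^TC_iP_1$. Being diagonal for every $i$ with $P_1$ orthogonal, the $C_i$ are orthogonally simultaneously diagonalizable, hence mutually commute, which is Condition~2.

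\textbf{Expected main obstacle.} The one step that needs genuine care is the reduction of the upper--left block of $P^TA_iP$ to $P_1^TC_iP_1$ in the necessity part: it requires substituting the relation $P_1P_3^T=-A_1^2(A_1^3)^{-1}$ obtained from $i=1$ and then repeatedly invoking Condition~1 and the symmetry of the diagonal blocks $A_i^3,A_1^3$ to see that the two cross terms and the quadratic term in $P_3$ combine to exactly $-P_1^TA_i^2(A_1^3)^{-1}(A_1^2)^TP_1$; the signs and transposes there are easy to get wrong. The only other points worth flagging are the preliminary normalization $P^TBP=B$ and the symmetry of $C_i$ (both minor but used implicitly), while the two explicit congruences and the appeal to Theorem~\ref{Theorem3-2} are routine.
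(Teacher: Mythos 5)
Your proof is correct and follows essentially the same route as the paper: normalize $P^TBP=B$ to force $P$ block lower-triangular with $P_1$ orthogonal, use the off-diagonal block together with invertibility of $A_1^3$ to solve $P_3=-(A_1^3)^{-1}(A_1^2)^TP_1$ and derive Condition~1, collapse the $(1,1)$ block to $P_1^TC_iP_1$ to get Condition~2, and for sufficiency apply the explicit block congruence followed by orthogonal diagonalization of the commuting $C_i$ via Theorem~\ref{Theorem3-2}. The only cosmetic difference is that you factor the sufficiency congruence into $U_1U_2$ whereas the paper writes the product as a single matrix $P$, and you spell out the permutation-and-scaling normalization of $P^TBP$ that the paper dismisses as a ``without loss of generality''; both are trivially equivalent.
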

\begin{proof}
The ``$\Rightarrow$'' part: If  $A_1,A_2,\ldots,A_m$ and $B$ are SD,  there exists a nonsingular matrix $P$ such that $P^TA_1P,P^TA_2P,\ldots,P^TA_mP$ and $P^TBP$ are all diagonal,
and we can assume, without loss of generality, $P^TBP=B$.
As $P^TBP=B$ if and only if
$$\left(\begin{array}{cc}
P_1^T&P_3^T\\
P_2^T&P_4^T\\\end{array}\right)\left(\begin{array}{cc}
I_p&\\
&0\\\end{array}\right)\left(\begin{array}{cc}
P_1&P_2\\
P_3&P_4\\\end{array}\right)=\left(\begin{array}{cc}
P_1^TP_1&P_1^TP_2\\
P_2^TP_1&P_2^TP_2\\\end{array}\right)=\left(\begin{array}{cc}
I_p&\\
&0\\\end{array}\right),$$
we conclude $P_1$ is orthogonal and $P_2=0$.

Furthermore, we have
\begin{eqnarray*}
P^TA_iP
&=&\left(\begin{array}{cc}
P_1^T&P_3^T\\
P_2^T&P_4^T\\\end{array}\right)\left(\begin{array}{cc}
A_i^1&A_i^2\\
(A_i^2)^T&A_i^3\\\end{array}\right)\left(\begin{array}{cc}
P_1&P_2\\
P_3&P_4\\\end{array}\right)\\
&=&\left(\begin{array}{cc}
P_1^TA_i^1P_1+P_3^TA_i^3P_3+P_1^TA_i^2P_3+P_3^T(A_i^2)^TP_1&P_1^TA_i^2P_4+P_3^TA_i^3P_4\\
P_4^T(A_i^2)^TP_1+P_4^TA_i^3P_3&P_4^TA_i^3P_4\\\end{array}\right).
\end{eqnarray*}

Since $P^TA_iP,~ i=1, \ldots, m$,  are diagonal, $P_4^T(A_i^2)^TP_1+P_4^TA_i^3P_3=0$ holds true for $i=1, \ldots, m$. As $P_4$ is nonsingular, we have $(A_i^2)^TP_1+A_i^3P_3=0$ for $i=1, \ldots, m$.
Solving $(A_1^2)^TP_1+A_1^3P_3$ = $0$ yields $P_3=-(A_1^3)^{-1}(A_1^2)^TP_1$. Substituting $P_3=-(A_1^3)^{-1}(A_1^2)^TP_1$ into $(A_i^2)^TP_1+A_i^3P_3=0$ and by the nonsingularity of $P_1$,
we further have $A_i^3(A_1^3)^{-1}(A_1^2)^T=(A_i^2)^T$, $i=2, \ldots, m$.

 Substituting $P_3=-(A_1^3)^{-1}(A_1^2)^TP_1$ into the (1,1)th block of $P^TA_iP$, $P_1^TA_i^1P_1+P_3^TA_i^3P_3+P_1^TA_i^2P_3+P_3^T(A_i^2)^TP_1$  gives rise to $P_1^T(A_1^1-A_1^2(A_1^3)^{-1}(A_1^2)^T)P_1$ when $i$ = $1$, and
\begin{eqnarray*}
&&P_1^T(A_i^1+A_1^2(A_1^3)^{-1}A_i^3(A_1^3)^{-1}(A_1^2)^T-A_i^2(A_1^3)^{-1}(A_1^2)^T-A_1^2(A_1^3)^{-1}(A_i^2)^T)P_1\\
&=&P_1^T(A_i^1-A_i^2(A_1^3)^{-1}(A_1^2)^T)P_1,
\end{eqnarray*} for $i=2,\ldots,m$.
Since the (1,1)th blocks of $P^TA_iP,~ i=1\ldots,m,$ and $B$ are all diagonal matrices and $P_1$ is orthogonal, we conclude
that $A_i^1-A_i^2(A_1^3)^{-1}(A_1^2)^T,~i=1,2,\ldots,m$, mutually commute.

The ``$\Leftarrow$'' part: If conditions 1 and 2 hold, then $P^TA_iP,~ i=1\ldots,m,$ and $P^TBP$ are all diagonal by choosing$$P=\left(\begin{array}{cc}
P_1&0\\
-(A_1^3)^{-1}(A_1^2)^TP_1&I_{n-p}
\end{array}\right), $$
where $P_1$ is the orthogonal matrix such that $P_1^T(A_i^1-A_i^2(A_1^3)^{-1}(A_1^2)^T)P_1$, $i=1,\ldots,m$, are all diagonal. Note that the existence of
$P_1$ is due to condition 2 and Theorem \ref{Theorem3-2}.
\end{proof}

Now we extend the result in Theorem \ref{Theorem3-4}, in which $\lambda_1A_1+\lambda_2A_2+\cdots+\lambda_mA_m\succ0$ holds true, to situations where only the following assumption holds.

\begin{asmp}\label{Strict-Semi}
For $m$ symmetric $n\times n$ matrices, $A_1,A_2,\ldots,A_m$, there exists $\lambda \in \Re^m$ and $\lambda\neq 0$, such that $\lambda_1A_1+\lambda_2A_2+\cdots+\lambda_mA_m\succeq0$,
but there does not exist $\lambda\in R^m$ such that $\lambda_1A_1+\lambda_2A_2+\cdots+\lambda_mA_m\succ0$.
Without loss of generality, we assume $\lambda_m\neq0$.
\end{asmp}

For a given set of $A_i$, $i$ = 1, $\ldots$, $m$, we can always use an SDP formulation to verify whether there exist
$\lambda\neq0$, which satisfies Assumption \ref{Strict-Semi}. If  the following
SDP is feasible, then there exists such a $\lambda\neq0$ because otherwise $tr(\sum_{i=1}^m\lambda_iA_i)=0$,
\begin{eqnarray*}
&\min &0\\
&{\rm s.t.}&\sum_{i=1}^m\lambda_iA_i\succeq0,\\
&&tr(\sum_{i=1}^m\lambda_iA_i)\geq1.
\end{eqnarray*}

When Assumption \ref{Strict-Semi} holds,  we can find a nonsingular matrix $Q_1$ and the corresponding $  \lambda\in \Re^m$ such that $\mathcal{A}_m:=Q_1^T(\lambda_1A_1+\lambda_2A_2+\cdots+\lambda_mA_m)Q_{1}=\left(\begin{array}{cc}
I_p&\\
&0\end{array}\right)$, and
\begin{equation}\label{mathcal-A}
\mathcal{A}_i:=Q_{1}^TA_iQ_{1}=\left(\begin{array}{ccc}
\mathcal{A}_i^1&\mathcal{A}_i^2\\
(\mathcal{A}_i^2)^T&\mathcal{A}_i^3\end{array}\right),~
 i=1, 2, \ldots, m-1,
 \end{equation}
 where $\dim \mathcal{A}_i^1=\dim I_p=p<n$.

Furthermore, if all $\mathcal{A}_i^3$, $i$ = 1, $\ldots$, $m$, are SD, then, by rearranging the common 0s to the lower right corner of the matrix, there exists a nonsingular matrix $Q_{2}:=\diag(I_p,V)$ such that
\begin{equation}\label{mathbf-Am}
\mathbf{A}_m:=Q_{2}^T\mathcal{A}_mQ_{2}=\left(\begin{array}{cc}
I_p&\\
&0\end{array}\right),
\end{equation} and
\begin{equation}\label{mathbf-A}
\mathbf{A}_i:   =Q_{2}^T\mathcal{A}_iQ_{2}=\left(\begin{array}{ccc}
\mathbf{A}_i^1&\mathbf{A}_i^2&\mathbf{A}_i^4\\
(\mathbf{A}_i^2)^T&\mathbf{A}_i^3&0\\
(\mathbf{A}_i^4)^T&0&0\end{array}\right),
\end{equation}
where $\mathbf{A}_i^1=\mathcal{A}_i^1,$ $\mathbf{A}_i^3, i=1,2,\ldots,m-1$, are all diagonal matrices and do not have common 0s in the same positions.

For any diagonal matrices $D$ and $E$, denote $\supp(D):=\{i~|~D_{ii}\neq0\}$ and $\supp(D)\cup \supp(E):=\{i~|~D_{ii}\neq0~{\rm or ~} E_{ii}\neq0 \}$.
\begin{lemma}\label{alpha}
For $k ~(k\geq2)$ $n\times n$ nonzero diagonal matrices $D^1,D^2,\ldots,D^k$, if there exists no common 0s in the same position, then the following procedure will find $\mu_i\in \Re, i=1,\ldots,k$ such that $\sum_{i=1}^k\mu_iD^i$ is nonsingular:

Step 1. Let $D=D^1$, $\mu_1=1$, and $\mu_i=0$, for $i=2,3,\ldots,n$, $j=1$;

Step 2. Let $D^*=D+\mu_{j+1}D^{j+1}$, where $\mu_{j+1}=\frac{s}{n}, s\in\{0,1,\ldots,n\}$ with  $s$ being chosen such that $D^*=D+\mu_{j+1}D^{j+1}$ and $\supp(D^*)=\supp(D)\cup \supp(D^{j+1})$;

Step 3. Let $D=D^*$, j=j+1; if $D$ is nonsingular or $j=n$, STOP and output $D$; Else, go to Step 2.
\end{lemma}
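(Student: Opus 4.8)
The plan is to verify two things about the procedure: that every pass is well defined, meaning Step~2 can always locate an admissible scalar $s$, and that the procedure terminates with a nonsingular matrix $D$ equal to $\sum_{i=1}^k\mu_iD^i$. The only point carrying real content is the existence of $s$ in Step~2; the rest is bookkeeping with diagonal supports.

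For Step~2, I would fix the current diagonal matrix $D$ and the nonzero diagonal matrix $D^{j+1}$ and observe that for every scalar $s$ the matrix $D^*:=D+\frac{s}{n}D^{j+1}$ automatically satisfies $\supp(D^*)\subseteq\supp(D)\cup\supp(D^{j+1})$, since an index at which both $D$ and $D^{j+1}$ vanish also makes $D^*$ vanish there. Hence the requirement $\supp(D^*)=\supp(D)\cup\supp(D^{j+1})$ is equivalent to $D^*_{ii}\neq0$ for every $i$ with $D_{ii}\neq0$ or $D^{j+1}_{ii}\neq0$. When $D^{j+1}_{ii}=0$ this is automatic, because then $D_{ii}\neq0$ and $D^*_{ii}=D_{ii}$; when $i\in\supp(D^{j+1})$ the number $D_{ii}+\frac{s}{n}D^{j+1}_{ii}$ is affine in $s$ with nonzero slope, so it vanishes for exactly one value of $s$. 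Thus at most $|\supp(D^{j+1})|\leq n$ values of $s$ are forbidden, whereas $\{0,1,\ldots,n\}$ supplies $n+1$ candidates, so an admissible $s$ exists. I expect this counting step to be the crux of the proof.

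It then remains to follow the supports. I would show by induction on the number of completed passes that whenever Step~3 is reached with counter value $j$ one has $\supp(D)=\bigcup_{\ell=1}^{j}\supp(D^\ell)$: this holds after Step~1, where $D=D^1$ and $j=1$, and the inductive step is exactly the identity $\supp(D^*)=\supp(D)\cup\supp(D^{j+1})$ secured in Step~2 together with the updates $D\leftarrow D^*$ and $j\leftarrow j+1$. Once all of $D^1,\ldots,D^k$ have been incorporated, the invariant yields $\supp(D)=\bigcup_{\ell=1}^k\supp(D^\ell)=\{1,\ldots,n\}$ by the hypothesis that the $D^\ell$ have no common diagonal zero, and a diagonal matrix with full support is nonsingular. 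Therefore the ``$D$ nonsingular'' test in Step~3 succeeds at the latest at that stage, and the procedure outputs $D=\sum_{i=1}^k\mu_iD^i$ with each $\mu_i$ real (in fact in $\{0,\frac{1}{n},\ldots,1\}$) and $D$ nonsingular, as claimed. The one thing to state carefully is that this conclusion does not depend on the order in which the matrices are consumed, nor on exactly how the index bound in the stopping rule is read: all that is used is that the loop runs long enough to exhaust the list $D^1,\ldots,D^k$, after which full support and hence nonsingularity are automatic.
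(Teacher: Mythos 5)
Your proof is correct and follows essentially the same route as the paper's: the crux in both is the counting argument that each index in $\supp(D^{j+1})$ forbids at most one value of $s$, so the $n+1$ candidates $s\in\{0,\ldots,n\}$ always contain an admissible one. You supplement this with an explicit induction on supports showing $\supp(D)=\bigcup_{\ell\le j}\supp(D^\ell)$ after each pass, a piece of bookkeeping the paper leaves implicit; your version is a touch more complete but not a different argument.
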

\begin{proof}
We only need to prove that in Step 2 we can find a suitable $s$.
The cardinality of $\supp(D^{j+1})$ is at most $n$ and that of $\supp(D)$ is at most $n-1$ (or we have already found a nonsingular matrix $D$). So if  $\supp(D^{j+1})=\supp(D)$, let $s=0$, and we have $\supp(D^*)=\supp(D)\cup\supp(D^{j+1})$.
Otherwise, for any $i\in \supp(D^{j+1})$, varying $s$ from 0 to $n$ will generate $n+1$ different values for $D_{ii}+\frac{s}{n}D^{j+1}_{ii}$, and at most one $s$ can lead $D_{ii}+\frac{s}{n}D^{j+1}_{ii}=0$.
 But as the cardinality of $\supp(D^{j+1})$ $\leq n$, we can always find an $s$ such that $D_{ii}+\mu_iD^{j+1}_{ii}\neq0$ for all $i\in \supp(D)\cup\supp(D^{j+1})$. One way to find such an $s$ is to test $s$ from $0$ to $n$ to find out the value of $s$ such that $\supp(D+\frac{s}{n}D^{j+1})=\supp(D)\cup \supp(D^{j+1})$.
\end{proof}

Denote
\begin{equation}\label{D3.5}
\mathbf{D}=\sum_{i=1}^{m-1} \mu_i \mathbf{A}_i=\left(\begin{array}{ccc}
\mathbf{D}_1&\mathbf{D}_2&\mathbf{D}_4\\
(\mathbf{D}_2)^T&\mathbf{D}_3&0\\
(\mathbf{D}_4)^T&0&0\end{array}\right),
\end{equation} where $\mu_i, i=1,2,\ldots,m-1$, are chosen, via the procedure in Lemma \ref{alpha}, such that $\mathbf{D}_3$ is nonsingular.
Then we have the following theorem for a semi-definite matrix pencil.

\begin{theorem}\label{3.5}
Under Assumption \ref{Strict-Semi}, $A_1,A_2,\ldots,A_m$ are SD $\Leftrightarrow$
$A_1,A_2,\ldots,\\A_{m-1}$ and $\tilde{A}_m:=\lambda_1A_1+\lambda_2A_2+\cdots+\lambda_mA_m\succeq0$ are SD $\Leftrightarrow$
$\mathcal{A}_i^3$ (defined in (\ref{mathcal-A})), $i=1,2,\ldots,m$, are SD, and the following conditions are also satisfied:\\
1. $\mathbf{A}_i^4=0$, $i=1,2,\ldots,m-1$,\\
2. $\mathbf{A}_i^2=\mathbf{D}_2 \mathbf{D}_3^{-1}\mathbf{A}_i^3,~i=1,2,\ldots,m-1$,\\
3. $\mathbf{A}_i^1-\mathbf{A}_i^2\mathbf{D}_3^{-1}\mathbf{D}_2^T,~i=1,2,\ldots,m-1$, mutually commute, where $\mathbf{A}_i^1$,
$\mathbf{A}_i^2$, $\mathbf{A}_i^3$ and $\mathbf{A}_i^4$ are defined in (\ref{mathbf-A}), and $\mathbf{D}$ is defined in (\ref{D3.5}).
\end{theorem}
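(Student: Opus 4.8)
The plan is to prove the two biconditionals in turn. The first one, that $A_1,\dots,A_m$ are SD if and only if $A_1,\dots,A_{m-1},\tilde A_m$ are SD, is exactly the argument of Theorem~\ref{Theorem3-3}: a common diagonalizer of the first family diagonalizes every real linear combination, in particular $\tilde A_m$, and conversely, since $\lambda_m\neq 0$, a common diagonalizer $P$ of $A_1,\dots,A_{m-1},\tilde A_m$ also diagonalizes $A_m=(P^T\tilde A_mP-\sum_{i=1}^{m-1}\lambda_iP^TA_iP)/\lambda_m$. Because simultaneous diagonalizability is a congruence invariant, the second biconditional is equivalent to: $\mathcal A_1,\dots,\mathcal A_{m-1},\mathcal A_m$ are SD if and only if the $\mathcal A_i^3$ are SD and, once $Q_2$, the $\mathbf A_i$ and $\mathbf D$ have been formed, conditions 1--3 hold. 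I would prove this by treating the two implications separately.

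For ``$\Leftarrow$'' the argument is constructive, undoing the reductions. Condition~1 makes $\mathbf A_1,\dots,\mathbf A_{m-1}$, $\mathbf A_m$ and $\mathbf D$ block matrices $\diag(\cdot\,,0_r)$, so it suffices to simultaneously diagonalize the leading $(p+q)\times(p+q)$ blocks $\bar{\mathbf A}_i$ together with $\diag(I_p,0_q)$. Conjugation by $S:=\left(\begin{array}{cc}I_p&0\\ -\mathbf D_3^{-1}\mathbf D_2^T&I_q\end{array}\right)$ fixes $\diag(I_p,0_q)$ and, by condition~2 and the symmetry of $\mathbf A_i^3$ and $\mathbf D_3$, sends $\bar{\mathbf A}_i$ to $\diag(\mathbf A_i^1-\mathbf A_i^2\mathbf D_3^{-1}\mathbf D_2^T,\ \mathbf A_i^3)$. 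By condition~3 the leading blocks $\mathbf A_i^1-\mathbf A_i^2\mathbf D_3^{-1}\mathbf D_2^T$ mutually commute, so by Theorem~\ref{Theorem3-2} they and $I_p$ are diagonalized by a single orthogonal $W$; then $\diag(W,I_q)$ keeps $\diag(I_p,0_q)$ diagonal and keeps each (already diagonal) $\mathbf A_i^3$ diagonal. Composing $Q_1$, $Q_2$, $S$ and $W$ (padded by identities) yields a common diagonalizer of $A_1,\dots,A_{m-1},\tilde A_m$, and the first biconditional then gives SD of $A_1,\dots,A_m$.

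For ``$\Rightarrow$'', assume $\mathcal A_1,\dots,\mathcal A_m$ are SD. After composing a common diagonalizer with a permutation and a positive scaling so that $\mathcal A_m=\diag(I_p,0)$ is preserved, the identity $P^T\mathcal A_mP=\mathcal A_m$ forces $P$ to be block lower triangular along the $p\mid(n-p)$ split with orthogonal leading block and nonsingular trailing block $P_4$; the trailing diagonal block of $P^T\mathcal A_iP$ is $P_4^T\mathcal A_i^3P_4$, so the $\mathcal A_i^3$ are SD and $Q_2$, the $\mathbf A_i$ and $\mathbf D$ exist, with $\mathbf A_1,\dots,\mathbf A_{m-1},\mathbf A_m$ and hence every real linear combination of them SD. For condition~1, fix $j\in\{1,\dots,m-1\}$ and a small $\epsilon\geq 0$; then $\mathbf D+\epsilon\mathbf A_j$ has nonsingular $(2,2)$-block, and conjugating the SD pair $(\mathbf A_m,\ \mathbf D+\epsilon\mathbf A_j+t\mathbf A_m)$, with $t$ large, by the unipotent congruence with $(2,1)$-block $-(\mathbf D_3+\epsilon\mathbf A_j^3)^{-1}(\mathbf D_2+\epsilon\mathbf A_j^2)^T$ fixes $\mathbf A_m$, clears the $(1,2)$-block of the second matrix, and --- after the elementary column reduction in the proof of Theorem~\ref{Two-SD} and discarding the common zero block when the surviving off-diagonal block is rank deficient --- produces a pair meeting the hypotheses of Lemma~\ref{lem2.3}; that lemma says the pair is not SD, a contradiction unless $\mathbf D_4+\epsilon\mathbf A_j^4=0$. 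Letting $\epsilon\to 0$ and varying $j$ gives $\mathbf A_i^4=0$ for all $i$, which is condition~1. With condition~1 in hand all matrices become $\diag(\cdot\,,0_r)$, and since $\mathbf D+t\mathbf A_m$ is SD with the family and for large $t$ its leading $(p+q)$-block is nonsingular (Schur complement), an analogous kernel-tracking argument, now along the $(p+q)\mid r$ partition, shows $\bar{\mathbf A}_1,\dots,\bar{\mathbf A}_{m-1},\diag(I_p,0_q),\bar{\mathbf D}$ are SD. Conjugation by $S$ preserves SD, fixes $\diag(I_p,0_q)$, sends $\bar{\mathbf D}$ to $\diag(\mathbf D_1-\mathbf D_2\mathbf D_3^{-1}\mathbf D_2^T,\ \mathbf D_3)$ with $\mathbf D_3$ nonsingular and diagonal, and leaves each $\mathbf A_i^3$ diagonal; the $S$-image of $\bar{\mathbf D}+t\diag(I_p,0_q)$ is a member of the resulting family with nonsingular diagonal $(2,2)$-block for $t$ large. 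Theorem~\ref{Theorem3-4} now applies with $B=\diag(I_p,0_q)$ and that member as its ``$A_1$'': its condition~1 reads $\mathbf A_i^2-\mathbf D_2\mathbf D_3^{-1}\mathbf A_i^3=0$, which is condition~2 here, and once this identity is used to simplify the leading blocks of the $S$-images to $\mathbf A_i^1-\mathbf A_i^2\mathbf D_3^{-1}\mathbf D_2^T$, its condition~2 becomes condition~3 here. The degenerate case $q=0$ (no $\mathbf A_i^3$ blocks, $\mathbf D$ undefined) is handled directly: condition~2 is vacuous and condition~3 reduces to commutativity of the $\mathbf A_i^1$, which follows from Theorem~\ref{Theorem3-2} and the Lemma~\ref{lem2.3} obstruction applied to $\mathbf A_m$ against a nonsingular shift of a $\mu$-combination of the $\mathbf A_i$.

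The main obstacle I anticipate is condition~1 in the ``$\Rightarrow$'' direction: one must realize, inside the congruence-closed family generated by the $\mathbf A_i$, an explicit pair satisfying the rather rigid hypotheses of Lemma~\ref{lem2.3}. This forces three technical moves that have to be combined with care --- a unipotent congruence built from $\mathbf D_3^{-1}$ to annihilate the $(1,2)$-blocks, a perturbation by $\epsilon\mathbf A_j$ to peel off each $\mathbf A_j^4$ individually rather than only the combination $\mathbf D_4=\sum_{i}\mu_i\mathbf A_i^4$, and the rank-reduction in the proof of Theorem~\ref{Two-SD} to accommodate a rank-deficient off-diagonal block --- together with the observation, used again for conditions~2 and~3, that shifting the semidefinite pencil by a large multiple of $\mathbf A_m$ raises its rank to exactly $p+q$, which is what lets a common diagonalizer split along the $(p+q)\mid r$ partition.
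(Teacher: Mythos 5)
Your plan is essentially correct and follows the same skeleton as the paper's proof: the first biconditional is dispatched by the argument of Theorem~\ref{Theorem3-3}; the ``$\Leftarrow$'' of the second biconditional is the same explicit construction (unipotent Schur-complement conjugation with $(2,1)$-block $-\mathbf D_3^{-1}\mathbf D_2^T$, followed by an orthogonal $P_1$ supplied by Theorem~\ref{Theorem3-2}); and the ``$\Rightarrow$'' extracts SD of the $\mathcal A_i^3$ from the forced block-lower-triangular shape of the congruence, then deduces condition~1 from the two-matrix theory, then reduces conditions~2--3 to Theorem~\ref{Theorem3-4}. Where you genuinely diverge from the paper is in the mechanism for condition~1. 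The paper observes that the family $\mathbf D,\mathbf A_2,\ldots,\mathbf A_{m-1},\mathbf A_m$ is SD, applies the reduction of Lemma~\ref{claim2-1} plus Theorem~\ref{Two-SD} to each pair $(\mathbf A_m,\mathbf D)$ and $(\mathbf A_m,\mathbf A_i)$ directly --- the fact that $\mathbf A_i^3$ may be singular is absorbed into the Lemma~\ref{claim2-1} reduction, which simply enlarges the $\mathcal B_2$-block by the zero columns of $\mathbf A_i^2$ --- and reads off $\mathbf D_4=0$ and $\mathbf A_i^4=0$. You instead perturb the pencil to $\mathbf D+\epsilon\mathbf A_j$ (to keep the $(2,2)$-block nonsingular without discussing the singular case), shift by $t\mathbf A_m$ to force $B_1$ nonsingular, and invoke Lemma~\ref{lem2.3} directly; you then recover $\mathbf A_j^4=0$ by letting $\epsilon$ vary. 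This is valid but buys nothing over the paper's route and requires tracking an $\epsilon$-parametrised family and a large $t$. Likewise for conditions~2--3 you shift by $t\bar{\mathbf A}_m$ before invoking Theorem~\ref{Theorem3-4}, but that shift is unnecessary because $\mathbf D_3$ is already nonsingular by the choice of $\mu$ in Lemma~\ref{alpha}, which is all Theorem~\ref{Theorem3-4} requires of the designated ``$A_1$''. One genuine improvement on your side: you explicitly flag the degenerate case $q=0$ (where $\mathbf D$ and $\mathbf D_3$ are undefined and condition~2 is vacuous), a boundary case the paper's proof passes over silently.
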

\begin{proof}
The first ``$\Leftrightarrow$'' can be proved by the same technique with the proof for Theorem \ref{Theorem3-3}.

We next prove the second ``$\Leftrightarrow$''.

The ``$\Rightarrow$'' part: $A_1,A_2,\ldots,A_{m-1}$ and $\tilde{A}_m$ are SD $\Rightarrow$ $\mathcal{A}_1,\mathcal{A}_2,\ldots,\mathcal{A}_m$
are SD (note that $\mathcal{A}_m=Q_1^T\tilde{A}_mQ_1$ by definition). From (\ref{mathcal-A}), (\ref{mathbf-Am})  and (\ref{mathbf-A}),
we know that there exists a nonsingular matrix $Q_{2}$ such that $Q_{2}^T\mathcal{A}_i Q_{2}, i=1,2,\ldots,m$, are SD and have the form in  (\ref{mathbf-Am})  and (\ref{mathbf-A}),
i.e.,$$\mathbf{A}_m:=Q_{2}^T\mathcal{A}_mQ_{2}=\left(\begin{array}{cc}
I_p&\\
&0\\\end{array}\right),$$ and $$\mathbf{A}_i:=Q_{2}^T\mathcal{A}_iQ_{2}=\left(\begin{array}{ccc}
\mathbf{A}_i^1&\mathbf{A}_i^2&\mathbf{A}_i^4\\
(\mathbf{A}_i^2)^T&\mathbf{A}_i^3&0\\
(\mathbf{A}_i^4)^T&0&0\end{array}\right).$$

Since $\mathcal{A}_i$ and $\mathcal{A}_m$ are SD, $i=1,2,\ldots,m-1$
$\Leftrightarrow$ $\mathbf{A}_i$ and $\mathbf{A}_m$ are SD, $i=1,2,\ldots,m-1$
$\Leftrightarrow$ $\mathbf{D},~\mathbf{A}_i$, $i=2,3,\ldots,m-1$, and $\mathbf{A}_m$ are SD.
From Theorem \ref{Two-SD} we know $\mathbf{D}_4=0$, $\mathbf{A}_i^4=0$, $i=2,3,\ldots,m-1$, so $\mathbf{A}_i^4=0$, $i=1,2,\ldots,m-1$.
Then from the proof in Theorem \ref{Theorem3-4}, we can conclude that conditions 2 and 3 are also satisfied.

The ``$\Leftarrow$'' part: Note that $Q_1^{T}A_iQ_{1}=\mathcal{A}_i$, $i=1,\ldots,m-1$.
Then if $\mathcal{A}_i^3$ are SD, $i=1,2,\ldots,m$, there exists a nonsingular matrix $Q_2$ such that (\ref{mathbf-Am}) and (\ref{mathbf-A}) holds. Further denote
\begin{equation}\label{conmatrix}
P:=\left(\begin{array}{ccc}
P_1&0&0\\
-(\mathbf{D}_3)^{-1}(\mathbf{D}_2)^TP_1&I_q&0\\
0&0&I_{n-p-q}
\end{array}\right),
\end{equation}
where $P_1$ is the orthogonal matrix such that $P_1^T(\mathbf{A}_i^1-\mathbf{A}_i^2(\mathbf{D}_3)^{-1}(\mathbf{D}_2)^T)P_1$, $i=1,\ldots,m-1$, are all diagonal,  and $q=\dim(\mathbf{A}_1^{3})$.
Then one can check that
\begin{eqnarray*}
&&P^T\mathbf{A}_iP\\
&=&\left(\begin{array}{ccc}
\begin{array}{l}P_1^T(\mathbf{A}_i^1-\mathbf{A}_i^2\mathbf{D}_3^{-1}\mathbf{D}_2^T-\mathbf{D}_2\mathbf{D}_3^{-1}(\mathbf{A}_i^2)^T\\
+\mathbf{D}_2\mathbf{D}_3^{-1}\mathbf{A}_i^3\mathbf{D}_3^{-1}\mathbf{D}_2^T)P_1\end{array}&P_1^T(\mathbf{A}_i^2-\mathbf{D}_2\mathbf{D}_3^{-1}\mathbf{A}_i^3)&0\\
((\mathbf{A}_i^2)^T-\mathbf{A}_i^3\mathbf{D}_3^{-1}\mathbf{D}_2^T)P_1&\mathbf{A}_i^3&0\\
0&0&I_{n-p-q}\end{array}\right)\\
&=&\left(\begin{array}{ccc}
P_1^T(\mathbf{A}_i^1-\mathbf{A}_i^2\mathbf{D}_3^{-1}\mathbf{D}_2^T)P_1&&\\
&\mathbf{A}_i^3&\\
&&I_{n-p-q}\end{array}\right),
\end{eqnarray*}
$i=1\ldots,m,$ are all diagonal. Thus, $A_1,A_2,\ldots,A_{m-1}$ and $\tilde{A}_m:=\lambda_1A_1+\lambda_2A_2+\cdots+\lambda_mA_m\succeq0$ are SD via the congruent matrix $Q_1Q_2P.$
\end{proof}

Although we choose $\mu$ following the procedure in Lemma \ref{alpha}, the proof shows that Theorem \ref{3.5} always holds for every $\mu$ such that $\mathbf{D}_3$ is nonsingular.
Based on the above discussion, we develop a systematic procedure in Algorithm \ref{alg2} to verify whether the given $m$ matrices are SD.

\begin{algorithm}
\caption{Procedure to check whether $m$ matrices are SD}
\label{alg2}
\begin{algorithmic}[1]
\REQUIRE $m$ symmetric matrices $A_1, A_2, \ldots, A_m$, which satisfy that $\exists \lambda\in \Re^m,~\lambda\neq0,$ such that $\sum_{i=1}^m\lambda_iA_i\succeq0$
\STATE Find a nonsingular matrix $Q_1$ such that $\mathcal{A}_m=Q_1^T(\lambda_1A_1+\lambda_2A_2+\cdots+\lambda_mA_m)Q_1=\left(\begin{array}{cc}
I_p&\\
&0\end{array}\right)$, in which we assume $\lambda_m\neq0$, otherwise we can exchange the index $m$ with $k$ for some $\lambda_k\neq0$; Define
$
\mathcal{A}_i=Q_1^TA_iQ_1=\left(\begin{array}{cc}
\mathcal{A}_i^1&\mathcal{A}_i^2\\
(\mathcal{A}_i^2)^T&\mathcal{A}_i^3\end{array}\right),
 ~i=1, 2, \ldots, m-1
$

\IF {$\mathcal{A}_i^3, ~i=1,2,\ldots,m-1$, are not SD}
\RETURN``not SD''\ELSE
\STATE Find a congruent matrix $V$ that makes all $\mathcal{A}_i^{3}$ diagonal\footnotemark, and  $Q_2=\diag(I_p,V)$ that makes $Q_2^TQ_1^T\mathcal{A}_iQ_1Q_2=\left(\begin{array}{ccc}
\mathbf{A}_i^1&\mathbf{A}_i^2&\mathbf{A}_i^4\\
(\mathbf{A}_i^2)^T&\mathbf{A}_i^3&0\\
(\mathbf{A}_i^4)^T&0&0\end{array}\right)$,  $i=1,\ldots,m-1$

\STATE Find $\mu_i,~ i=1,2,\ldots,m-1$, via the procedure in Lemma \ref{alpha}, such that $\mathbf{D}_3:=\sum_{i=1}^{m-1} \mu_i \mathbf{A}_i^3$ is nonsingular, and denote $\mathbf{D}_2:=\sum_{i=1}^{m-1} \mu_i \mathbf{A}_i^2$
\IF {i) $\mathbf{A}_i^4=0$, $i=1,2,\ldots,m-1$, ii) $\mathbf{A}_i^2=\mathbf{D}_2 \mathbf{D}_3^{-1}\mathbf{A}_i^3,i=2,\ldots,m-1$, and
iii) $\mathbf{A}_i^1-\mathbf{A}_i^2\mathbf{D}_3^{-1}\mathbf{D}_2^T,~i=2,\ldots,m-1$, mutually commute}
\RETURN $U^TA_iU, i=1,\ldots,m$, and $U=Q_1Q_2P$, where $P$ is defined in (\ref{conmatrix})\ELSE
\RETURN ``not SD''
\ENDIF
\ENDIF
\end{algorithmic}
\end{algorithm}

\textbf{Example 2}
The following example shows a case in which three matrices do not mutually commute, but they are SD. Consider
$A=\left(\begin{array}{cc}
1&2\\
2&20\\\end{array}\right)$,
$B=\left(\begin{array}{cc}
-1&-2\\
-2&-28\\\end{array}\right)$, and
$C=\left(\begin{array}{cc}
3&6\\
6&-20\\\end{array}\right)$.
It is easy to check that $AB\neq BA$. Let $U=\left(\begin{array}{cc}
1&-0.5\\
0&0.25\\\end{array}\right)$. Then, we can show that $A$, $B$ and $C$ are SD, as $U^TAU=\left(\begin{array}{cc}
1&0\\
0&1\\\end{array}\right)$, $U^TBU=\left(\begin{array}{cc}
-1&0\\
0&-1.5\\\end{array}\right)$, and $U^TCU=\left(\begin{array}{cc}
3&0\\
0&-2\\\end{array}\right)$.

\section{Applications to quadratically constrained quadratic programming}

Consider the following QCQP problem:
\begin{eqnarray*}
{\rm (QP)}~~~
& \min & f^0(x) \\
&  {\rm s.t.} & f^i(x)\leq0,~i=1,\ldots,m,
\end{eqnarray*}
where $f^0(x)=\frac{1}{2}x^TA_0x+a_0^Tx$, and $f^i(x)=\frac{1}{2}x^TA_ix+a_i^Tx+\frac{1}{2}d_i$, $A_0,A_i\in S^n$, $a_0,a_i\in \Re^n$ and
$d_i\in\Re$, $i=1,\ldots,m$.

Let us consider first the homogeneous case where $a_i=0,~ i=0,1,\ldots,m$.
If $A_0,A_1,\ldots,A_m$ are SD with $\overline{A}_i$ being the diagonalized matrix of $A_i$, then $\rm(QP)$ is equivalent to the following problem:
\begin{eqnarray*}
{\rm (HQPSD)}~~~
& \min & \sum_{k=1}^n\frac{1}{2}\Dg(\bar{A}_0)_kx_k^2 \\
&  {\rm s.t.} &  \sum_{k=1}^n \frac{1}{2}\Dg(\bar{A}_i)_kx_k^2 +\frac{1}{2}d_i\leq0,~i=1,\ldots,m.
\end{eqnarray*}
\footnotetext{In the algorithm, we always rearrange the common 0s to the lower right corner of the $V^T\mathcal{A}_i^3V$, $i$ = 1, $\ldots$, $m-1$, via the congruent matrix $V$, and use a recursion to find the congruent matrix $V$ that makes all $\mathcal{A}_i^{3}$ diagonal.}

In fact, ($\rm HQPSD$) is equivalent to the following linear programming problem formulation:
\begin{eqnarray*}
&  \min & \sum_{k=1}^n\frac{1}{2}\Dg(\bar{A}_0)_k y_k \\
&  {\rm s.t.} &   \sum_{k=1}^n \frac{1}{2}\Dg(\bar{A}_i)_ky_k +\frac{1}{2}d_i\leq0,~i=1,\ldots,m,\\
&~&   y_k\geq0,~k=1,\ldots,n,
\end{eqnarray*}
which can be solved efficiently by several methods in the literature, e.g., the simplex algorithm.

Introducing $x_{n+1}=\pm1$ in nonhomogeneous (QP) gives rise to the following equivalent problem formulation:
\begin{eqnarray*}
{\rm (QP')}~~~
& \min &\frac{1}{2}x^TA_0x+a_0^Txx_{n+1} \\
& {\rm s.t.} & \frac{1}{2}x^TA_ix+a_i^Txx_{n+1}+\frac{1}{2}d_{i}x_{n+1}^2\leq0,~i=1,\ldots,m,\\
&&x_{n+1}^2=1.
\end{eqnarray*}
Denote $B_i=\left(\begin{array}{ccc}
A_i&a_i\\
a_i^T&d_i\end{array}\right), ~i=0, 1, \ldots, m$ (here $d_0=0$), and $B_{m+1}=\left(\begin{array}{ccc}
0_n&\\
&1\end{array}\right)$. We can rewrite (QP$'$) as the following homogeneous problem:
\begin{eqnarray*}
&\min&\frac{1}{2}x^TB_0x \\
&  {\rm s.t.} & \frac{1}{2}x^TB_ix\leq0,~i=1,\ldots,m,\\
&& x^TB_{m+1}x=1.
\end{eqnarray*}
We can then apply Algorithm \ref{alg2} to check the SD condition of the above problem. In an SD situation, a nonhomogeneous QCQP problem can be also reduced to an equivalent linear programming formulation, similar to the homogeneous case.

Furthermore, when $m=1$ or $2$, the above problem has exact relaxations in some cases as showed below.

For $m=1$, it is the GTRS, which possesses an exact relaxation when SD condition holds as we discussed in (TRS) and (GTRS) in Section 2.2.

For $m=2$, we have the following SOCP relaxation when the three matrices are SD:
\begin{eqnarray*}
{\rm(QP_2-SOCP)}~~~~~& \min& \delta^Ty+\epsilon^Tx\\
&  {\rm s.t.} &\alpha^Ty+\beta^Tx \leq \frac{1}{2}d_{1},\\
& &\eta^Ty+\theta^Tx \leq \frac{1}{2}d_{2},\\
&& \frac{1}{2}x_i^2\leq y_i,~i=1,\ldots,n,
\end{eqnarray*}
where $\delta,\epsilon,\alpha,\beta,\eta,\theta\in \Re^n$, $\delta=\Dg(P^TA_0P)$, $\alpha=\Dg(P^TA_1P)$, $\eta=\Dg(P^TA_2P)$,
$\epsilon=P^Ta_0$, $\beta=P^Ta_1$, $\theta=P^Ta_2$ and $P$ is the congruent matrix that makes $P^TA_iP$, $i=0,1,2$, all diagonal.
Ben-Tal and Hertog \cite{B1} demonstrate that if one of the KKT multipliers of the first two constraints in $\rm(QP_2-SOCP)$ is 0, then the SOCP relaxation is exact.
For the SOCP relaxation of problem (IGTRS), the two quadratic constraints are relaxed to
\begin{eqnarray*}
&&\alpha^Ty+\beta^Tx\leq u,\\&&-\alpha^Ty-\beta^Tx\leq -l,
\end{eqnarray*}
and one of the KKT multipliers must be 0, as the two boundary conditions cannot be binding simultaneously, i.e., it can not be $l=\alpha^Ty+\beta^Tx=u$, thus, satisfying Assumption 6 in \cite{B1} automatically.
\section{Conclusion}
In this paper, we have succeeded in providing complete answers to the open question on simultaneous diagonalization (SD) posted in \cite{H}. More specifically, we have identified a necessary and sufficient SD condition for any two real symmetric matrices and a necessary and sufficient SD condition for multiple (more than two) real symmetric matrices under the existence assumption of a semi-definite matrix pencil. Furthermore, we have demonstrated how SD can be utilized as a powerful instrument to verify the exactness of the SOCP relaxation  of QCQP, especially with one or two quadratic constraints, and to facilitate the solution process.  One of our future work is to find a necessary and sufficient SD condition for multiple matrices without the assumption of semi-definite matrix pencil and find more real-world applications of our SD procedure.

\Appendix\section{Derivation of the congruent matrix for Example 1}
In this appendix, we show how to derive the congruent matrix $P$ for Example 1 by applying Algorithm 1:
First note that $A$ is already in the form of $\rm diag(A_1,0)$ and thus we do not need to goto Line 1.
Denote $A_1:=\left(\begin{array}{ccc}
1& 0& 0\\
0& 4& 0\\
0& 0& 9\end{array}\right)$,
$B_1:=\left(\begin{array}{ccc}
1& 2& 0\\
2& 5& 1\\
0& 1& 7\end{array}\right)$,
 $B_2:=\left(\begin{array}{ccc}
0& 3& 0\\
0& 0& 0\\
0& 0& 0\end{array}\right)$, and
 $B_3:=\left(\begin{array}{ccc}
2& 2& 0\\
2& 5& 0\\
0& 0& 0\end{array}\right)$.
Next we goto Line 2.
Applying the spectral decomposition to $B_3$ yields $$V_{1}^TB_3V_{1}=\left(\begin{array}{ccc}
6& 0& 0\\
0& 1& 0\\
0& 0& 0\end{array}\right), {\rm~ where~}V_{1}:=\left(\begin{array}{ccc}
-0.4472&  -0.8944& 0\\
 -0.8944& 0.4472& 0\\
0& 0& 1\end{array}\right).$$
Letting $Q_2:=\diag(I_3,V_{1})$ gives rise to Line 3.
$$\hat{B}:=Q_2^TBQ_2=\left(\begin{array}{ccc}
B_1&B_4&0\\
B_4^T&B_6&0\\
0^T&0&0
\end{array}\right),$$ where $B_4:=\left(\begin{array}{ccc}
-2.6833 & 1.3416\\
0&0\\
0&0
\end{array}\right) $ and $B_6:=\left(\begin{array}{ccc}
6&0\\
0&1
\end{array}\right) $. Note that since $B_5=0$,  $A$ and $B$ are SD and we skip Lines 4--6.
Following Line 7, letting $$Q_3:=\left(\begin{array}{ccc}
I_3&&\\
-B_6^{-1}B_4^T&I_2&\\
&&1
\end{array}\right)=\left(\begin{array}{cccccc}
1& 0& 0& 0& 0& 0\\
0& 1& 0& 0& 0& 0\\
0& 0& 1& 0& 0& 0\\
0.4472& 0& 0& 1& 0& 0\\
-1.3416& 0& 0& 0& 1& 0\\
0& 0& 0& 0& 0& 1
\end{array}\right)$$ further yields
\begin{eqnarray*}
&\tilde{B}:=&Q_3^T\hat{B}Q_3\\
&=&\left(\begin{array}{ccc}
B_1-B_4B_6^{-1}B_4^T&&B_5\\
&B_6&\\
B_5^T&&
\end{array}\right) \\
&=&\left(\begin{array}{cccccc}
-2& 2& 0& 0& 0& 0\\
2& 5& 1& 0& 0& 0\\
0& 1& 7& 0& 0& 0\\
0& 0& 0& 6& 0& 0\\
0& 0& 0& 0& 1& 0\\
0& 0& 0& 0& 0& 0
\end{array}\right).
\end{eqnarray*}
Then we goto Line 8. With  $$B_1-B_4B_6^{-1}B_4^T=\left(\begin{array}{ccc}
-2& 2& 0\\
2& 5& 1\\
0& 1& 7\\
\end{array}\right),~A_1=\left(\begin{array}{ccc}
1& 0& 0\\
0& 4& 0\\
0& 0& 9
\end{array}\right),$$ we obtain$$A_1^{-1}(B_1-B_4B_6^{-1}B_4^T)=\left(\begin{array}{ccc}
  -2  &  2   &      0\\
    0.5 &   1.25  &  0.25\\
         0 &   0.1111  &  0.7778
 \end{array}\right).$$
Applying Jordan decomposition gives rise to $J:=V_2^{-1}A_1^{-1}(B_1-B_4B_6^{-1}B_4^T)V_2$ = $\diag(1.5657,-2.2837,0.7458)$, where
$$V_{2}:=\left(\begin{array}{ccc}
    3.98 & 194.23  & -0.21\\
    7.09 & -27.55  & -0.29\\
    1 &   1  &  1
\end{array}\right).$$
Thus, $A_1$ and $B_1-B_4B_6^{-1}B_4^T$ are SD, and with $V_2^TA_1V_2=\diag(226,40772,9)$ and
$V_2^T(B_1-B_4B_6^{-1}B_4^T)V_2=\diag(354,-93111,7)$.
Next we go to Line 9. Denote $$Q_4:=\diag(V_{2},I_3)=\left(\begin{array}{cccccc}
3.98 & 194.23  & -0.21&&&\\
    7.09 & -27.55  & -0.29&&&\\
    1 &   1  &  1   &           &  &\\
 &  &  & 1    &              &      \\
& &   &      &       1       &       \\
    &      &  &          &                   & 1
\end{array}\right).$$
Note that in the last step, $V_2^TA_1V_2$ is already diagonal, so we do not need to apply spectral decomposition to $V_2^TA_1V_2$. Now by letting the congruent matrix $P:=Q_2Q_3Q_4$, we have $$P^TAP=\diag(226,40772,9,0,0,0),$$ and $$P^TBP=\diag(354,-93111,7,6,1,0).$$

\end{document}